\newtheorem{thm}{Theorem}[section]
\newtheorem{lem}[thm]{Lemma}
\newtheorem{rem}[thm]{Remark}
\numberwithin{equation}{section}
\definecolor{red}{rgb}{1,0,0}
\begin{document}
\begin{CJK*}{GBK}{song}
\title{Lower bounds of eigenvalues of the biharmonic operators by the rectangular Morley element methods
\thanks{The first author was supported by the NSFC Project 11271035 and by the NSFC Key Project 11031006.}}
\author{\normalsize Jun Hu$^\dagger$,~~Xueqin Yang$^\dagger$ ~~ \\ \normalsize
$^\dagger$ LMAM and School of Mathematical Sciences,
Peking University, \\ \normalsize Beijing 100871, P.R.China
\\\vspace{3mm} \normalsize email: hujun@math.pku.edu.cn; ~~ yangxueqin1212@pku.edu.cn\\ \normalsize
}
\date{}
\maketitle
\CJKindent
\begin{abstract}
In this paper, we analyze the lower bound property of the discrete eigenvalues by the rectangular Morley elements of the biharmonic operators  in both  two and three dimensions. The analysis relies on an identity for the errors of eigenvalues. We  explore a refined property
 of the canonical interpolation operators and use it to analyze the key term in this identity. In particular, we show that such a term  is of higher order for two dimensions, and  is  negative and  of second order for three dimensions, which causes a main difficulty. To overcome it, we propose a novel decomposition of the first term in the aforementioned identity. Finally, we establish  a saturation condition  to  show that the discrete eigenvalues
  are smaller than the exact ones. We present some numerical results to demonstrate the theoretical results.\\\\
\textbf{Keywords:}  the rectangular Morley element, the eigenvalue problem,  lower bound
\end{abstract}

\section{Introduction}
We are interested in lower bounds of  the eigenvalue problem: Find $\lambda \in \mathbb{R}$ and $w \in V:=H_0^2(\Omega) $, such that
\begin{equation}
 a(w,v):=(\nabla^2 w,\nabla^2 v)_{L^2(\Omega)}=\lambda(w,v)_{L^2(\Omega)}\;\;\text{for  any}\;\; v \in V. \label{eq1}
\end{equation}
where $\nabla^2 w$ denotes the Hessian matrix of the function $w$. In 1979, Rannacher \cite{Rannacher} discovered
 by numerical results that both two dimensional Morley  and Adini elements eigenvalues could approximate the exact eigenvalues from below.  In 2000, Yang \cite{Yang} proved such a phenomenon  by showing that a consistent error defined there
 of the Adini element  is positive and of second order.  The analysis of  \cite{Yang} is based on a key fact that
  the order of convergence is  two for the Adini element eigenfunction in the energy norm.
  In 2012, Hu and Huang \cite{HuHuang} developed a correction operator for the canonical interpolation operators of  the Adini element  in both  two and three dimensions  and proved that the discrete eigenvalues are smaller than the exact ones, which employed an identity of the errors of eigenvalues due to \cite{ArmDuran} and \cite{ZhangYangChen}, see also \cite{HuHuangLin}.
  In particular, they showed that the last term in that identity is positive and of second order, which is based on the fact
  that the order of  convergence of the canonical interpolations of the Adini element is  two  in the energy norm.
    Besides, Hu, Huang and Lin \cite{HuHuangLin} proposed a new systematic method that can produce lower bounds for eigenvalues. In the same paper, they also showed that the Adini element satisfies the condition there and  consequently produces lower bounds of  eigenvalues.  We refer interested readers to \cite{HuangLiLin,LinLin,LinHuangLi} for the analysis, based on the expansion method,  of the lower bound property of discrete eigenvalues by  nonconforming rectangular elements of the Laplace operator in two dimensions. For other related works, we refer to \cite{YangLinBiLi} and \cite{YangZhangLin} and the references therein.

     The  purpose of the current paper  is to analyze the lower bound property of the discrete eigenvalues  obtained  by the rectangular Morley elements.  We shall follow \cite{HuHuang} and \cite{HuHuangLin} to use  the  identity  from \cite{ArmDuran,ZhangYangChen}. Note that the elements can not be analyzed by the theory of \cite{HuHuangLin}.  In addition, compared with  the Adini element analyzed in \cite{Yang} and \cite{HuHuang}, the
      main difficulties for the elements under consideration  are
      \begin{itemize}
   \item   the order of convergence is only one for both the discrete eigenfunctions and canonical interpolations  in the energy norm;
     \item  for the three dimensional element, the last term in the aforementioned identity is negative and of  second order.
      \end{itemize}
      To overcome these two difficulties, we use the expansion method proposed in \cite{HuShi2} to study a refined property
        of the canonical interpolation operators and propose a novel decomposition of the first term in the identity by using the canonical interpolation operators. Moreover, we prove a saturation condition and employ it to show that
        the discrete eigenvalues by the two and three dimensional rectangular Morley elements are smaller than the exact ones.

This paper is organized as follows. In the following section, we shall present the two-dimensional rectangular Morley element, and  show a refined property of the canonical interpolation operator and  use it  to prove that the discrete eigenvalues are smaller than the exact ones.  In section 3, we present the three-dimensional rectangular Morley element, show a refined property of the canonical interpolation operator,  and propose a novel decomposition of  the first
 term in the identity and   employ it, after establishing a saturation condition,  to prove the lower bound property of discrete eigenvalues. In section 4, we present some numerical results to demonstrate our theoretical results.

\section{Lower bounds of eigenvalues by the two-dimensional rectangular Morley element}

\subsection{The two-dimensional rectangular Morley element }

To consider the discretization of $(\ref{eq1})$ by the rectangular Morley element method, let $\mathcal{T}_h$ be a regular uniform rectangular triangulation of the domain $\Omega\subset \mathbb{R}^2$ in  two dimensions. Given $K\in \mathcal{T}_h$, let $(x_{1,c},x_{2,c})$ be the center of $K$, the meshsize $h$ and affine mapping:\\
\begin{equation} \label{eq18}  \xi_1=\frac{x_1-x_{1,c}}{h},\quad\xi_2=\frac{x_2-x_{2,c}}{h}\;\; \text{for any} \;(x_1,x_2)\in K.\end{equation}
On element $K$, the shape function space of the rectangular Morley element from \cite{ShiWang} reads\\
\begin{equation}
P_T(K):=P_2(K)+\text{span}\{x_1^3,x_2^3\},
\end{equation}
here and throughout this paper, $P_l(K)$ denotes the space of polynomials of degree $\leq l$ over $K$. The nodal parameters are: for any $v\in C^1(K)$,
\begin{equation}
D_T(v)=\bigg(v(a_i),\;\;\frac{1}{|F_j|}\int_{F_j} \frac{\partial v}{\partial \nu_{F_j}}\;\mathrm{d}s\bigg),\;\;i,j=1,2,3,4, \label{eq32}
\end{equation}
where $a_i$ are vertices of $K$ and $F_j$ are edges of $K$. $|F_j|$ denote measure of edges $F_j$, see Figure 1.
\begin{figure}
\begin{center}
\setlength{\unitlength}{2cm}
\begin{picture}(2,2)
\put(0,0.5){\line(1,0){2}} \put(0,0.5){\line(0,1){1}}
\put(2,0.5){\line(0,1){1}} \put(0,1.5){\line(1,0){2}}
\put(0,0.5){\circle*{0.1}}
\put(0,1.5){\circle*{0.1}}
\put(2,0.5){\circle*{0.1}}
\put(2,1.5){\circle*{0.1}}

\put(1,0.5){\vector(0,-1){0.3}}
\put(1,0.1){$\int$}\put(1,1.8){$\int$} \put(-0.4,1){$\int$} \put(2.3,1){$\int$}
\put(1,1.5){\vector(0,1){0.3}}
\put(0,1){\vector(-1,0){0.3}}
\put(2,1){\vector(1,0){0.3}}

\linethickness{0.6mm}
\end{picture}
\end{center}
\caption{degrees of freedom }
\end{figure}
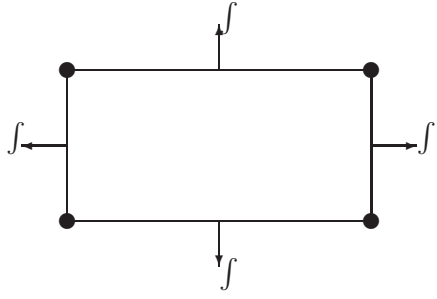
Corresponding to the nodal parameters, the basis functions are the same as those which can be found in \cite{ShiWang}. The $P_T$-unisolvence of $D_T$ can be proved similarly as \cite{ShiWang}.
The nonconforming rectangular Morley element space is then defined by
\begin{eqnarray*}
V_h:&=&\{v \in L^2(\Omega):\;v|_K \in P_T(K),\; \forall K \in \mathcal{T}_h, \;v\;is\;continuous\;at\;all\;internal\;vertices\;and\\&&\;vanishes\;at\;all\;boundary\;vertices,\;and\int_F \frac{\partial v}{\partial \nu_F}\,\mathrm{d}s\;is\;continuous\;at\;internal\;edges\;\\&& F\;and\;vanishes\;at\;boundary\;edges\; of \;\mathcal{T}_h\}.
\end{eqnarray*}
The finite element approximation of Problem (\ref{eq1}) reads: Find $\lambda_h \in \mathbb{R}$ and $w_h \in V_h$, such that
\begin{equation}
a_h(w_h,v_h):=(\nabla_h^2 w_h,\nabla_h^2 v_h)_{L^2(\Omega)}=\lambda_h(w_h,v_h)_{L^2(\Omega)}\;\; \text{for any}\; v_h \in V_h. \label{eq45}
\end{equation}
where the operator $\nabla_h^2$ is the discrete counterpart of $\nabla^2$, which is defined element by element since the discrete space $V_h$ is nonconforming.
\subsection{Interpolation operators}
Given $K\in \mathcal{T}_h$, define the interpolation operator $\Pi_K : H^3(K)\rightarrow P_T(K)$ by, for any $v \in H^3(K) $,
\begin{equation} \label{eq4}  (\Pi_K v)(P)=v(P)\; and\;\int_F \frac{\partial \Pi_K v}{\partial \nu_F}\;\mathrm{d}s=\int_F \frac{\partial v}{\partial \nu_F}\;\mathrm{d}s, \end{equation}\\
for any vertex $P$ of $K$ and any edge $F$ of $K$ in the two-dimensional case or any face $F$ of $K$ in the three-dimensional case. The interpolation operator $\Pi_K$ has the following error estimates:
\begin{equation}\label{eq5}  |v-\Pi_K v|_{H^l(K)}\leq Ch^{3-l}|v|_{H^3(K)},\;l=0,1,2,3, \end{equation}\\
provided that $v \in H^3(K)$. Herein and throughout this paper, $C$ denotes a generic positive constant which is independent of the meshsize and may be different at different places. Then the global version $\Pi_h$ of the interpolation operator $\Pi_K$ is defined as
\begin{equation}\label{eq6}  \Pi_h|_K=\Pi_K\;\; \text{for any}\; K \in \mathcal{T}_h. \end{equation}\\
In the following, let $\nabla^l v$ denote the $l$-$th$ order tensor of all the $l$-$th$ order derivatives of $v$ and $\nabla_h^l$ denote the piecewise counterpart of $\nabla^l$ defined element by element.

Given any element $K$, we follow \cite{HuShi1} to define $P_K v \in P_4(K)$ by
\begin{equation}
 \int_K\nabla^l P_K v\,\mathrm{d}x_1\mathrm{d}x_2=\int_K\nabla^l v\,\mathrm{d}x_1\mathrm{d}x_2,\;l=0,1,2,3,4, \label{eq56}
\end{equation}
for two dimensions, and \begin{equation}\label{eq7} \int_K\nabla^l P_K v\,\mathrm{d}x_1\mathrm{d}x_2\mathrm{d}x_3=\int_K\nabla^l v\,\mathrm{d}x_1\mathrm{d}x_2\mathrm{d}x_3,\;l=0,1,2,3,4,
\end{equation}
for three dimensions, for any $v \in H^4(K).$ Note that the operator $P_K$ is well-defined. The interpolation operator $P_K$ has the following error estimates:
\begin{eqnarray}
 |v-P_K v|_{H^j(K)}&\leq& Ch^{4-j}|v|_{H^4(K)},\;j=0,1,2,3,4,\nonumber\\
 |v-P_K v|_{H^j(K)}&\leq& Ch|v|_{H^{j+1}(K)},\;j=0,1,2,3,\label{eq34}
 \end{eqnarray}\\
provided that $v \in H^4(K)$. It follows from the definition of $P_K$ in (\ref{eq7}) that
\begin{equation}
\nabla^4 P_K v=\Pi_{0,K}\nabla^4 v, \label{eq8}
\end{equation}
with $\Pi_{0,K}$ the $L^2$ constant projection operator over $K$. The global version $\Pi_0$ of the interpolation operator $\Pi_{0,K}$ is defined as
\begin{equation}\label{eq9} \Pi_0|_K=\Pi_{0,K}\;\text{for any}\;K \in \mathcal{T}_h. \end{equation}
\subsection{Lower bounds of eigenvalues}
In this section, we are in the position to show that the approximate eigenvalues are smaller than the exact ones.
Define a semi-norm over $V_h$ by
\begin{equation*}|u_h|_h^2:=a_h(u_h,u_h)\; \;\;\text{for any}\; u_h\in V_h. \end{equation*}
By the error estimates of the interpolation $\Pi_h u$ and the finite element solution $u_h$ of the eigenfunction $u$, it follows from \cite{ShiWang} and \cite{HuHuangLin,HuShi1} that
\begin{eqnarray}
||u-u_h||_{L^2(\Omega)}\leq C h^2,\quad |u-u_h|_h^2\leq C h^2. \label{eq47}
\end{eqnarray}
By the triangle inequality and (\ref{eq5}), (\ref{eq47}), we can get that
\begin{equation}
||\Pi_h u-u_h||_{L^2(\Omega)}\leq||u-u_h||_{L^2(\Omega)}+||u-\Pi_h u||_{L^2(\Omega)}\leq C h^2.\label{eq51}
\end{equation}
\begin{thm}
Let $(\lambda,u)$ and $(\lambda_h,u_h)$ be the solutions to (\ref{eq1}) and (\ref{eq45}), respectively, and assume that $u \in H_0^2(\Omega)\bigcap H^4(\Omega)$,\;then,
\[\lambda_h \leq \lambda,\]
provided that $h$ is small enough.
\end{thm}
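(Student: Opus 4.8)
The plan is to start from the error identity of \cite{ArmDuran,ZhangYangChen}: normalizing $\|u\|_{L^2(\Omega)}=\|u_h\|_{L^2(\Omega)}=1$, one has
\[
\lambda-\lambda_h=a_h(u-u_h,u-u_h)-\lambda_h\|u-u_h\|_{L^2(\Omega)}^2+2\bigl(a_h(u,u_h)-\lambda_h(u,u_h)\bigr),
\]
and it suffices to show that the right-hand side is nonnegative once $h$ is small. The middle term is $O(h^4)$ by (\ref{eq47}). For the consistency term, testing (\ref{eq45}) with $\Pi_h u\in V_h$ gives $a_h(u_h,\Pi_h u)=\lambda_h(u_h,\Pi_h u)$, so by symmetry of $a_h$
\[
a_h(u,u_h)-\lambda_h(u,u_h)=a_h(u_h,u-\Pi_h u)-\lambda_h(u_h,u-\Pi_h u),
\]
in which $\lambda_h(u_h,u-\Pi_h u)=O(h^3)$ by (\ref{eq5}) with $l=0$ and (\ref{eq47}). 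The refined property of $\Pi_K$ I would exploit is that, by (\ref{eq4}) and Green's formula, $\Pi_K$ reproduces the element average of the Hessian, $\int_K\nabla^2\Pi_K v\,\mathrm{d}x=\int_K\nabla^2 v\,\mathrm{d}x$, i.e. $\Pi_{0,K}\nabla^2\Pi_K v=\Pi_{0,K}\nabla^2 v$; since then $\int_K\nabla^2(v-\Pi_K v)\,\mathrm{d}x=0$, subtracting $\Pi_{0,K}\nabla^2 u_h$ elementwise upgrades $a_h(u_h,u-\Pi_h u)$ from the naive $O(h)$ to $O(h^2)$.

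Next comes the decomposition of the first term. With $\psi:=u-\Pi_h u$ and $\phi_h:=u_h-\Pi_h u\in V_h$, so $u-u_h=\psi-\phi_h$, I would expand $a_h(u-u_h,u-u_h)$ and add the surviving part $2a_h(u_h,\psi)=2a_h(\phi_h,\psi)+2a_h(\Pi_h u,\psi)$ of the consistency term; the cross terms $\pm 2a_h(\psi,\phi_h)$ cancel and, using $a_h(\psi+2\Pi_h u,\psi)=a_h(u+\Pi_h u,u-\Pi_h u)=\|\nabla^2 u\|_{L^2(\Omega)}^2-|\Pi_h u|_h^2$, one reaches the exact identity
\[
\lambda-\lambda_h=|u_h-\Pi_h u|_h^2+\Bigl(\|\nabla^2 u\|_{L^2(\Omega)}^2-|\Pi_h u|_h^2\Bigr)-\lambda_h\|u-u_h\|_{L^2(\Omega)}^2-2\lambda_h(u_h,u-\Pi_h u).
\]
The last two terms are $O(h^3)$ by (\ref{eq5}), (\ref{eq47}), (\ref{eq51}), and $|u_h-\Pi_h u|_h^2\ge0$, so the theorem reduces to proving $\|\nabla^2 u\|_{L^2(\Omega)}^2-|\Pi_h u|_h^2\ge c\,h^2$ for some $c>0$ and $h$ small.

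For this estimate I would use the refined property once more to split, on each $K$,
\[
\|\nabla^2 u\|_{L^2(K)}^2-\|\nabla^2\Pi_K u\|_{L^2(K)}^2=\|\nabla^2 u-\Pi_{0,K}\nabla^2 u\|_{L^2(K)}^2-\|\nabla^2\Pi_K u-\Pi_{0,K}\nabla^2 u\|_{L^2(K)}^2,
\]
and then expand both oscillation terms by the method of \cite{HuShi2}, using the polynomial $P_K u\in P_4(K)$ and the identity $\nabla^4 P_K u=\Pi_{0,K}\nabla^4 u$ from (\ref{eq8}) to hold the Bramble--Hilbert remainders to order $Ch^2|u|_{H^4(K)}$ on the $\nabla^3$-scale (this is where $u\in H^4(\Omega)$ is used). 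Because the mesh is uniform rectangular and a function in $P_2(K)+\text{span}\{x_1^3,x_2^3\}$ has vanishing mixed third derivatives, with $\partial_{111}\Pi_K u\approx(\partial_{111}-\partial_{122})u$ and $\partial_{222}\Pi_K u\approx(\partial_{222}-\partial_{112})u$ at the cell center, I expect the term-by-term comparison to give
\[
\|\nabla^2 u\|_{L^2(\Omega)}^2-|\Pi_h u|_h^2=\frac{h^2}{6}\int_\Omega\bigl(\partial_{122}u\,\partial_1\Delta u+\partial_{112}u\,\partial_2\Delta u\bigr)\,\mathrm{d}x+O(h^3).
\]
Finally, integrating by parts and using that $\partial_{12}u$ vanishes on $\partial\Omega$ (on a boundary segment with normal $\pm e_1$ one has $\partial_1 u\equiv0$, hence its tangential derivative $\partial_{12}u\equiv0$, and symmetrically with the roles of the axes exchanged), the boundary contributions drop and the integral collapses to $2\bigl(\|\partial_{112}u\|_{L^2(\Omega)}^2+\|\partial_{122}u\|_{L^2(\Omega)}^2\bigr)$; thus $\|\nabla^2 u\|_{L^2(\Omega)}^2-|\Pi_h u|_h^2=\tfrac{h^2}{3}\bigl(\|\partial_{112}u\|_{L^2(\Omega)}^2+\|\partial_{122}u\|_{L^2(\Omega)}^2\bigr)+O(h^3)$, whose leading coefficient is strictly positive since $\partial_{112}u\equiv\partial_{122}u\equiv0$ together with $u\in H_0^2(\Omega)$ forces $u\equiv0$, contradicting $\lambda\neq0$. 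This yields $\lambda-\lambda_h\ge c\,h^2-Ch^3\ge0$ for $h$ small.

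The main obstacle is the sharpened expansion in the last step: the generic bound (\ref{eq5}) only gives $\|\nabla^2 u-\nabla^2\Pi_K u\|_{L^2(K)}\le Ch|u|_{H^3(K)}$, the same order $h$ as the quantities being compared, so one has to extract the leading term exactly — in particular the nonstandard leading behaviour $\partial_{111}\Pi_K u\approx(\partial_{111}-\partial_{122})u$ of the interpolant's third derivative — and then keep every $O(h^3)$ remainder (the term $\lambda_h(u_h,u-\Pi_h u)$, the quadrature error in replacing $\sum_K$ by $\int_\Omega$, the error in $P_K u$) safely below the $h^2$-order main term, the boundary integration by parts being what makes that main term manifestly nonnegative. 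For the three-dimensional element of Section 3 the analogue of $\|\nabla^2 u\|_{L^2(\Omega)}^2-|\Pi_h u|_h^2$ comes out negative and of order $h^2$, so there the positive contribution $|u_h-\Pi_h u|_h^2$ can no longer be discarded and must be bounded from below, which is precisely why a saturation condition has to be established in that case.
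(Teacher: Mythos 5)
Your argument is sound and shares the paper's skeleton: the same Armentano--Dur\'an/Zhang--Yang--Chen identity, the same $O(h^4)$ and $O(h^3)$ bounds on the $L^2$ terms, and in fact the same decomposition as the paper's (\ref{eq80}) --- your quantity $\|\nabla^2 u\|_{L^2(\Omega)}^2-|\Pi_h u|_h^2$ is exactly the paper's first plus third term of (\ref{eq80}), i.e.\ $\|\nabla_h^2(u-\Pi_h u)\|^2_{L^2(\Omega)}+2(\nabla_h^2(u-\Pi_h u),\nabla_h^2\Pi_h u)_{L^2(\Omega)}$, just regrouped. Both proofs also rest on the same technical core: the bubble-function expansion of $u-\Pi_K u$ (Lemma \ref{lem1}), the operator $P_K$ to absorb the $H^4$-remainder at order $o(h^2)$, and the double integration by parts using $\partial_{12}u|_{\partial\Omega}=0$. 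Where you genuinely diverge is the endgame. The paper proves that the cross term $2(\nabla_h^2(u-\Pi_h u),\nabla_h^2\Pi_h u)$ is $o(h^2)$ (its $h^2$ contributions cancel exactly in Lemma \ref{lem2}) and then quotes the saturation condition $h^2\leq C|u-u_h|_h^2$ so that the two nonnegative squares dominate. You instead extract the explicit leading term of $\|\nabla^2 u\|^2-|\Pi_h u|_h^2$ and show its coefficient is strictly positive by the rigidity argument ($\partial_{112}u\equiv\partial_{122}u\equiv 0$ forces $u\equiv 0$) --- precisely the argument the paper only deploys in three dimensions (Lemma \ref{lem3}). Your route buys independence from the saturation condition for $u_h$ (replacing it by an explicit lower bound for the interpolation error, which your expansion delivers), and it identifies the leading term of $\lambda-\lambda_h$; the paper's route avoids having to compute that leading term exactly. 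One quantitative slip: carrying out your Pythagorean split on each cell gives $\frac{2h^2|K|}{3}\bigl(\partial_{122}u\,\partial_1\Delta u+\partial_{112}u\,\partial_2\Delta u\bigr)$ at the cell center (the mixed derivative is counted twice in the Frobenius norm and $\partial_{12}\Pi_K u$ is constant), so the final answer is $\frac{4h^2}{3}\bigl(\|\partial_{112}u\|^2_{L^2(\Omega)}+\|\partial_{122}u\|^2_{L^2(\Omega)}\bigr)+o(h^2)$ rather than $\frac{h^2}{3}(\cdots)$; the sign and order are as you claim, so the conclusion stands.
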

\begin{proof}
We need to use an identity for the errors of the eigenvalues from \cite{ArmDuran,ZhangYangChen}, see also \cite{HuHuangLin}.
\begin{eqnarray}
\lambda-\lambda_h &=&|u-u_h|_h^2-\lambda_h(\Pi_h u-u_h,\Pi_h u-u_h)_{L^2(\Omega)}\nonumber\\&&
                    +\lambda_h\big(||\Pi_h u||_{L^2(\Omega)}^2-||u||_{L^2(\Omega)}^2\big)+2a_h(u-\Pi_h u,u_h). \label{eq37}
\end{eqnarray}
We can bound the second term by (\ref{eq51})
\begin{equation}
(\Pi_h u-u_h,\Pi_h u-u_h)_{L^2(\Omega)} \leq C h^4
\end{equation}
and the third term as
\begin{eqnarray}
|||\Pi_h u||_{L^2(\Omega)}^2-||u||_{L^2(\Omega)}^2|&=&|(\Pi_h u,\Pi_h u)_{L^2(\Omega)}-(u,u)_{L^2(\Omega)}|\nonumber\\
                                               &=&|(\Pi_h u-u,\Pi_h u)_{L^2(\Omega)}+(u,\Pi_h u-u)_{L^2(\Omega)}|\nonumber\\
                                               &\leq&Ch^3|u|_{H^3(\Omega)}\big(||\Pi_h u||_{L^2(\Omega)}+||u||_{L^2(\Omega)}\big)\nonumber\\
                                               &\leq&Ch^3|u|_{H^3(\Omega)}\big(||\Pi_h u-u||_{L^2(\Omega)}+2||u||_{L^2(\Omega)}\big)\nonumber\\
                                               &\leq&Ch^3|u|_{H^3(\Omega)}^2.
\end{eqnarray}
Herein, we provide a new method to deal with the fourth term. A combination of the first term and the fourth term of (\ref{eq37}) has the following decomposition
\begin{eqnarray}
|u-u_h|^2_h+2a_h(u-\Pi_h u,u_h)&=&
(\nabla_h^2(u-\Pi_h u),\nabla_h^2(u-\Pi_h u))_{L^2(\Omega)}\nonumber\\&&+(\nabla_h^2(\Pi_h u-u_h),\nabla_h^2(\Pi_h u-u_h))_{L^2(\Omega)}\nonumber\\&&+2(\nabla_h^2(u-\Pi_h u),\nabla_h^2(\Pi_h u-u_h)_{L^2(\Omega)}\nonumber\\&&+2(\nabla_h^2(u-\Pi_h u),\nabla_h^2u_h)_{L^2(\Omega)}\nonumber\\&=&||\nabla_h^2(u-\Pi_h u)||^2_{L^2(\Omega)}+||\nabla_h^2(\Pi_h u-u_h)||^2_{L^2(\Omega)}\nonumber\\&&+2(\nabla_h^2(u-\Pi_h u),\nabla_h^2\Pi_h u)_{L^2(\Omega)}. \label{eq80}
\end{eqnarray}
The third term of (\ref{eq80}) will be analyzed in Lemma \ref{lem2} below, which reads
\begin{equation}
(\nabla_h^2(u-\Pi_h u),\nabla_h^2\Pi_h u)_{L^2(\Omega)}\leq \alpha_h h^2, \label{eq48}
\end{equation}
where $\lim\limits_{h\rightarrow 0}\alpha_h=0.$
It follows from \cite{HuHuangLin} that there holds the saturation condition $h^2\leq C |u-u_h|_h^2$.
Hence, by (\ref{eq47})$-$(\ref{eq48}), the sign of $\lambda-\lambda_h$ is dominated by the first term and the second term of (\ref{eq80}).
\end{proof}
\subsection{A refined property of the interpolation operator }
Given $K\in \mathcal{T}_h$, for ease of presentation, we define seven bubble functions with respect to the degrees of freedom defined as in (\ref{eq32}) as follows
 \begin{eqnarray}
 \begin{array}{lll}
 \varphi_{i,j}=\xi_i^2\xi_j-\frac{4}{3}\xi_j+\frac{\xi_j^3}{3},\quad i,j=1,2,\;i\neq j,\\
 \psi_i=(\xi_i^2-1)^2,\quad i=1,2,\\
 p=\xi_1^2+\xi_2^2-\frac{\xi_1^3}{3}-\frac{\xi_2^3}{3}-\frac{1}{3},\\
 q_{i,j}=\xi_i^3\xi_j-\xi_i\xi_j,\quad i,j=1,2,\,i\neq j,\label{eq40}
 \end{array}
 \end{eqnarray}
In fact, it can be checked that
 \begin{eqnarray*}
 &&\left\{
 \begin{array}{lll}
 \varphi_{i,j}(a_k)=0,\quad i,j=1,2,\,i\neq j,\,k=1,2,3,4,\\
 \int_{F_k} \frac{\partial\varphi_{i,j}}{\partial\nu_{F_k}}\;\mathrm{d}s=0,\quad i,j=1,2,\,i\neq j,\,k=1,2,3,4,\\
 \end{array}
 \right.\\
 &&\left\{
 \begin{array}{ccc}
 \psi_i(a_k)=0,\quad i=1,2,\,k=1,2,3,4,\\
 \int_{F_k} \frac{\partial\psi_i}{\partial\nu_{F_k}}\;\mathrm{d}s=0,\quad i=1,2,\,k=1,2,3,4,\\
 \end{array}
 \right.\\
 &&\left\{
 \begin{array}{lll}
 p(a_k)=0,\quad k=1,2,3,4,\\
 \int_{F_k} \frac{\partial p}{\partial\nu_{F_k}}\;\mathrm{d}s=0,\quad k=1,2,3,4,\\
 \end{array}
 \right.\\
 &&\left\{
 \begin{array}{lll}
 q_{i,j}(a_k)=0,\quad i,j=1,2,\,i\neq j,\,k=1,2,3,4,\\
 \int_{F_k} \frac{\partial q_{i,j}}{\partial\nu_{F_k}}\;\mathrm{d}s=0,\quad i,j=1,2,\,i\neq j,\,k=1,2,3,4,
 \end{array}
 \right.
 \end{eqnarray*}
 where $a_k$ are vertices of $K$, and $F_k$ are edges of $K$ .\\

In the next lemma, we follow the idea of \cite{HuShi1} to analyze a new refined property for the interpolation operator, which is a basis for the analysis of the term $a_h(u-\Pi_h u,u_h)$.

\begin{lem} \label{lem1}
Given $K\in \mathcal{T}_h$, for any $u\in P_4(K)$ and $ v\in P_T(K)$, there holds that
 \begin{eqnarray*}
(\nabla^2(u-\Pi_Ku),\nabla^2v)_{L^2(K)}&=&\frac{h^2}{3}\int_K\frac{\partial^3u}{\partial x_1\partial x_2^2}\frac{\partial^3v}{\partial x_1^3}\,\mathrm{d}x_1\mathrm{d}x_2\\&&+\frac{h^2}{3}\int_K\frac{\partial^3u}{\partial x_1^2\partial x_2}\frac{\partial^3v}{\partial x_2^3}\,\mathrm{d}x_1\mathrm{d}x_2.
\end{eqnarray*}.
\end{lem}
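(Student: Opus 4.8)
The plan is to pass to the reference square $\widehat K$ with vertices $(\pm1,\pm1)$ (the normalization forced by the bubble functions in (\ref{eq40})) through the affine map (\ref{eq18}). Both sides of the asserted identity are bilinear in $(u,v)$, and under (\ref{eq18}) they pick up the \emph{same} scaling factor $h^{-2}$ — on the left from $\nabla^{2}\!\cdot\nabla^{2}\cdot\mathrm dx\sim h^{-2}h^{-2}h^{2}$, on the right from $h^{2}\cdot\partial^{3}\!\cdot\partial^{3}\cdot\mathrm dx\sim h^{2}h^{-6}h^{2}$ — so it is equivalent to prove the $h$-free form of the identity on $\widehat K$, with $\Pi_K$ replaced by the corresponding reference interpolation (the parameters (\ref{eq32}) are scale invariant). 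Write $\partial_i=\partial/\partial x_i$ and $w:=u-\Pi_Ku$; by (\ref{eq4}) every parameter in (\ref{eq32}) vanishes on $w$, and by the $P_T$-unisolvence of $D_T$ the operator $\Pi_K$ reproduces $P_T(K)$, so $w=0$ whenever $u\in P_T(K)$.

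\emph{Reduction in $v$.} If $v\in P_2(K)$ then $\partial_1^3 v=\partial_2^3 v=0$, so the right-hand side is zero; the left-hand side is zero too, because $\nabla^2 v$ is then a constant matrix and
\begin{equation*}
\int_K\nabla^2 w\,\mathrm dx=0 .
\end{equation*}
This identity is the crux. By the divergence theorem each entry of $\int_K\nabla^2 w\,\mathrm dx$ is a linear combination of the vertex values of $w$ and of the edge integrals $\int_F\partial_{\nu_F}w\,\mathrm ds$ — for instance $\int_K\partial_1^2 w\,\mathrm dx=\int_{\partial K}(\partial_1 w)\nu_1\,\mathrm ds$ collapses to the normal-derivative integrals over the two edges parallel to the $x_2$-axis, and $\int_K\partial_1\partial_2 w\,\mathrm dx=\int_{\partial K}(\partial_2 w)\nu_1\,\mathrm ds$ collapses to a combination of vertex values — and all of these quantities vanish for $w=u-\Pi_Ku$. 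Hence the expression in the Lemma depends on $v$ only through its cubic part, and by the symmetry $x_1\leftrightarrow x_2$, which interchanges the two right-hand terms, it suffices to verify the identity for $v=\xi_1^3$ on $\widehat K$; this produces the first $\tfrac{h^2}{3}$-term, and the mirror choice $v=\xi_2^3$ the second.

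\emph{Reduction in $u$.} Both sides are linear in $u$ and vanish for $u\in P_T(K)$ — on the right because $\partial_1\partial_2^2$ and $\partial_1^2\partial_2$ annihilate $P_2(K)$ together with the cubic monomials $x_1^3,x_2^3$ — so it is enough to let $u$ range over a basis of a complement of $P_T(K)$ in $P_4(K)$, for instance the seven bubble functions in (\ref{eq40}), for which $\Pi_Ku=0$ and hence $w=u$. For $v=\xi_1^3$ the matrix $\nabla^2 v$ has $(1,1)$-entry $6\xi_1$ and all other entries $0$, and $\partial_1^3 v=6$, so the identity to be checked on $\widehat K$ reduces to
\begin{equation*}
6\int_{\widehat K}\xi_1\,\partial_1^2 u\,\mathrm d\xi=2\int_{\widehat K}\partial_1\partial_2^2 u\,\mathrm d\xi .
\end{equation*}
A short evaluation of these seven pairs of elementary integrals shows that both sides vanish for six of the seven bubbles — in each such case the integrand is either identically zero or odd in $\xi_1$ or in $\xi_2$ — while for $u=\varphi_{2,1}=\xi_1\xi_2^2-\tfrac43\xi_1+\tfrac13\xi_1^3$ one has $\partial_1^2 u=2\xi_1$ and $\partial_1\partial_2^2 u=2$, so both sides equal $16$. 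Adding the mirror computation, in which $\varphi_{1,2}$ plays the role of $\varphi_{2,1}$, and undoing the scaling to $K$ then yields exactly the two terms in the statement.

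\emph{Expected main obstacle.} The only genuinely non-routine point is the vanishing of $\int_K\nabla^2 w\,\mathrm dx$: one has to notice that, although the rectangular Morley parameters are far too weak to control $\nabla^2 w$ pointwise, they are — through the divergence theorem — precisely strong enough to kill the cell average of every second derivative, which is what makes the leading-order part of $a_h(u-\Pi_Ku,v)$ disappear and leaves only the simple higher-order remainder recorded in the Lemma. The remainder of the argument (the passage to $\widehat K$ and the seven explicit integrals) is routine bookkeeping.
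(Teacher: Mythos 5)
Your proof is correct, and it is organized genuinely differently from the paper's. The paper works directly on the physical element: it writes $u-\Pi_K u$ explicitly through the Taylor/bubble expansion (\ref{eq57}), differentiates term by term, expands $\partial^2 v/\partial x_i^2=\overline{\partial^2 v/\partial x_i^2}+h\,(\partial^3 v/\partial x_i^3)\,\xi_i$, and then kills all but one product in each integral by parity and the symmetry of $K$. You instead exploit bilinearity twice: both sides vanish for $v\in P_2(K)$ --- on the left because $\int_K\nabla^2(u-\Pi_Ku)\,\mathrm{d}x=0$, which you obtain cleanly from the degrees of freedom (\ref{eq32}) via the divergence theorem --- and both sides vanish for $u\in P_T(K)$, so the identity reduces to a finite check of $v\in\{\xi_1^3,\xi_2^3\}$ against a basis of a complement of $P_T(K)$ in $P_4(K)$. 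The divergence-theorem observation is only implicit in the paper (it is exactly what the parity cancellation of the $\overline{\partial^2 v/\partial x_i^2}$-terms amounts to), and your reduction makes it transparent why precisely one bubble ($\varphi_{2,1}$ for $v=\xi_1^3$) survives; the paper's more computational route has the side benefit of producing the explicit expansion (\ref{eq57}), which is reused later in Lemma \ref{lem2}. One caveat: your finite check presupposes that the seven functions in (\ref{eq40}) really are bubbles spanning a complement of $P_T(K)$ in $P_4(K)$. As printed, $p$ is missing its leading $\xi_1^2\xi_2^2$ term and actually lies in $P_T(K)$, so your reduced identity $6\int_{\widehat K}\xi_1\partial_1^2u\,\mathrm{d}\xi=2\int_{\widehat K}\partial_1\partial_2^2u\,\mathrm{d}\xi$ would fail for that particular $p$; this is a typo in the paper (the derivative formula (\ref{eq24}) is consistent only with the corrected $p$), and with the corrected bubble all seven of your verifications go through exactly as you state, with both sides equal to $16$ for $\varphi_{2,1}$.
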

\begin{proof}
Let $\xi_1$ and $\xi_2 $  be defined as in $(\ref{eq18})$. It follows from the definition of $P_T(K)$ that\\
\begin{eqnarray}
\frac{\partial^2v}{\partial x_i^2}&=&\overline{\frac{\partial^2v}{\partial x_i^2}}+h\frac{\partial^3v}{\partial x_i^3}\xi_i,\quad i=1,2,\\
\frac{\partial^2v}{\partial x_1\partial x_2}&=&\overline{\frac{\partial^2v}{\partial x_1\partial x_2}}, \label{eq21}
\end{eqnarray}
where $\overline{f}$ denotes the integral average of $f$ over $K$, namely, \begin{equation*}\overline{f}=\frac{1}{|K|}\int_K f\mathrm{d}x_1\mathrm{d}x_2.\end{equation*} Since $u\in P_4(K)$, the Taylor expansion and the definition of the operator $\Pi_K$ yield\\
\begin{eqnarray}
u-\Pi_K u&=&\frac{h^3}{2!}\sum_{i\neq j=1}^2\overline{\frac{\partial^3u}{\partial x_i^2\partial x_j}}\varphi_{i,j}+
          \frac{h^4}{4!}\sum_{i=1}^2\frac{\partial^4u}{\partial x_i^4}\psi_i\nonumber\\&&+
          \frac{h^4}{2!2!}\frac{\partial^4u}{\partial x_1^2\partial x_2^2}p+
          \frac{h^4}{3!}\sum_{i\neq j=1}^2\frac{\partial^4u}{\partial x_i^3\partial x_j}q_{i,j},\label{eq57}
\end{eqnarray}
where $\varphi_{i,j}$, $\psi_{i,j}$, $p$, and $q_{i,j}$ are defined as in (\ref{eq40}).
Hence, the second order partial derivative of $u-\Pi_K u$ with respect to the variable $x_1$ reads
\begin{eqnarray}
\frac{\partial^2(u-\Pi_Ku)}{\partial x_1^2}&=&\frac{h}{2!}\overline{\frac{\partial^3u}{\partial x_1^2\partial x_2}}2\xi_2+
                                          \frac{h}{2!}\overline{\frac{\partial^3u}{\partial x_1\partial x_2^2}}2\xi_1\nonumber\\&&+
                                          \frac{h^2}{4!}\frac{\partial^4u}{\partial x_1^4}(12\xi_1^2-4)+
                                          \frac{h^2}{2!2!}\frac{\partial^4u}{\partial x_1^2\partial x_2^2}(2\xi_2^2-\frac{2}{3})\nonumber\\&&+
                                          \frac{h^2}{3!}\frac{\partial^4u}{\partial x_1^3\partial x_2}6\xi_1\xi_2.\label{eq24}
\end{eqnarray}
A combination of $(\ref{eq21})$ and $(\ref{eq24})$ plus some elementary calculation gives
\begin{eqnarray*}
\int_K\frac{\partial^2(u-\Pi_Ku)}{\partial x_1^2}\frac{\partial^2v}{\partial x_1^2}\,\mathrm{d}x_1\mathrm{d}x_2&=&
                \int_K \bigg(\frac{h}{2!}\overline{\frac{\partial^3u}{\partial x_1^2\partial x_2}}2\xi_2+
                                          \frac{h}{2!}\overline{\frac{\partial^3u}{\partial x_1\partial x_2^2}}2\xi_1+
                                          \frac{h^2}{4!}\frac{\partial^4u}{\partial x_1^4}(12\xi_1^2-4)
                                          +\frac{h^2}{2!2!}\\&&\frac{\partial^4u}{\partial x_1^2\partial x_2^2}(2\xi_2^2-\frac{2}{3})+
                                          \frac{h^2}{3!}\frac{\partial^4u}{\partial x_1^3\partial x_2}6\xi_1\xi_2\bigg)\Big(\overline{\frac{\partial^2v}{\partial x_1^2}}+h\frac{\partial^3v}{\partial x_1^3}\xi_1\Big)\;\mathrm{d}x_1\mathrm{d}x_2.
\end{eqnarray*}
Since all coefficients like $\overline{\frac{\partial^3u}{\partial x_1^2\partial x_2}}$ and $\frac{\partial^4 u}{\partial x_1^4}$ are constants,  we can get that by  parity of functions and symmetry of domains:
\begin{eqnarray*}
\int_K 2h\xi_2\;\mathrm{d}x_1\mathrm{d}x_2=0,\;\int_K 2h\xi_1\;\mathrm{d}x_1\mathrm{d}x_2=0,\;\int_K h^2(12\xi_1^2-4)\;\mathrm{d}x_1\mathrm{d}x_2=0,\\
\int_K h^2(2\xi_2^2-\frac{2}{3})\;\mathrm{d}x_1\mathrm{d}x_2=0,\;\int_K h^26\xi_1\xi_2\;\mathrm{d}x_1\mathrm{d}x_2=0.
\end{eqnarray*}
Hence, only one nonzero term is left, which reads
\begin{eqnarray*}
\int_K \frac{h}{2!}\overline{\frac{\partial^3u}{\partial x_1\partial x_2^2}}2\xi_1h\frac{\partial^3v}{\partial x_1^3}\xi_1\;\mathrm{d}x_1\mathrm{d}x_2&=&\frac{h^2}{3}\int_K\frac{\partial^3u}{\partial x_1\partial x_2^2}\frac{\partial^3v}{\partial x_1^3}\,\mathrm{d}x_1\mathrm{d}x_2.\\
\end{eqnarray*}
This yields
\begin{eqnarray*}
\int_K\frac{\partial^2(u-\Pi_Ku)}{\partial x_1^2}\frac{\partial^2v}{\partial x_1^2}\,\mathrm{d}x_1\mathrm{d}x_2&=&
                 \frac{h^2}{3}\int_K\frac{\partial^3u}{\partial x_1\partial x_2^2}\frac{\partial^3v}{\partial x_1^3}\,\mathrm{d}x_1\mathrm{d}x_2.
\end{eqnarray*}
A similar argument proves
\begin{eqnarray*}
\int_K\frac{\partial^2(u-\Pi_Ku)}{\partial x_2^2}\frac{\partial^2v}{\partial x_2^2}\,\mathrm{d}x_1\mathrm{d}x_2&=&
                 \frac{h^2}{3}\int_K\frac{\partial^3u}{\partial x_1^2\partial x_2}\frac{\partial^3v}{\partial x_2^3}\,\mathrm{d}x_1\mathrm{d}x_2.
\end{eqnarray*}
Finally, the second order mixed partial derivative of $u-\Pi_K u$ is
\begin{eqnarray}
\frac{\partial^2(u-\Pi_Ku)}{\partial x_1\partial x_2}&=&\frac{h}{2!}\overline{\frac{\partial^3u}{\partial x_1^2\partial x_2}}2\xi_1\nonumber+
                                                   \frac{h}{2!}\overline{\frac{\partial^3u}{\partial x_1\partial x_2^2}}2\xi_2\nonumber\\&&+
                                                   \frac{h^2}{2!2!}\frac{\partial^4u}{\partial x_1^2\partial x_2^2}4\xi_1\xi_2\nonumber+
                                                   \frac{h^2}{3!}\frac{\partial^4u}{\partial x_1^3\partial x_2}(3\xi_1^2-1)\nonumber\\&&+
                                                   \frac{h^2}{3!}\frac{\partial^4u}{\partial x_1\partial x_2^3}(3\xi_2^2-1). \label{eq22}
\end{eqnarray}
 A similar procedure of the first part of the proof, this and (\ref{eq21}) lead to
\begin{eqnarray}
\int_K\frac{\partial^2(u-\Pi_Ku)}{\partial x_1\partial x_2}\frac{\partial^2v}{\partial x_1\partial x_2}\,\mathrm{d}x_1\mathrm{d}x_2&=&0,\label{eq12}
\end{eqnarray}
which completes the proof.
\end{proof}
Next, we use Lemma \ref{lem1}  to analyze the key term in the proof of  Theorem 2.1.
\begin{lem} \label{lem2}
Suppose that $w \in H_0^2(\Omega) \bigcap H^4(\Omega)$ with $\Omega\subset \mathbb{R}^2$. Then,
\begin{equation}
 (\nabla_h^2(w-\Pi_h w),\nabla_h^2 \Pi_h w)_{L^2(\Omega)}\leq\alpha_h h^2,\end{equation}
 where $\lim\limits_{h\rightarrow 0}\alpha_h=0.$

\end{lem}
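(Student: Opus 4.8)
The plan is to localize the pairing, to replace $w$ on each element $K$ by the polynomial $P_Kw\in P_4(K)$ of (\ref{eq56}), to apply Lemma \ref{lem1} to the polynomial part, and to control the remaining pieces with the interpolation estimate (\ref{eq5}) together with a Poincar\'e argument on each element. Writing $(\nabla_h^2(w-\Pi_hw),\nabla_h^2\Pi_hw)_{L^2(\Omega)}=\sum_{K\in\mathcal{T}_h}(\nabla^2(w-\Pi_Kw),\nabla^2\Pi_Kw)_{L^2(K)}$, setting $r_K:=w-P_Kw$, and using the linearity of $\Pi_K$, each local term splits as $S_{1,K}+S_{2,K}$ with
\begin{equation*}
S_{1,K}:=\bigl(\nabla^2(P_Kw-\Pi_KP_Kw),\nabla^2\Pi_Kw\bigr)_{L^2(K)},\qquad S_{2,K}:=\bigl(\nabla^2(r_K-\Pi_Kr_K),\nabla^2\Pi_Kw\bigr)_{L^2(K)} .
\end{equation*}
Since $P_Kw\in P_4(K)$ and $\Pi_Kw\in P_T(K)$, Lemma \ref{lem1} applies to $S_{1,K}$; because $\partial^3\Pi_Kw/\partial x_i^3$ is constant on $K$ and $\int_K\nabla^3P_Kw\,\mathrm{d}x=\int_K\nabla^3w\,\mathrm{d}x$ by (\ref{eq56}), it gives
\begin{equation*}
S_{1,K}=\frac{h^2}{3}\,\frac{\partial^3\Pi_Kw}{\partial x_1^3}\int_K\frac{\partial^3w}{\partial x_1\partial x_2^2}\,\mathrm{d}x+\frac{h^2}{3}\,\frac{\partial^3\Pi_Kw}{\partial x_2^3}\int_K\frac{\partial^3w}{\partial x_1^2\partial x_2}\,\mathrm{d}x .
\end{equation*}

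Differentiating the expansion (\ref{eq57}) three times, exactly as in the proof of Lemma \ref{lem1}, one checks that for $u\in P_4(K)$,
\begin{equation*}
\frac{\partial^3\Pi_Ku}{\partial x_1^3}=\Pi_{0,K}\frac{\partial^3u}{\partial x_1^3}-\Pi_{0,K}\frac{\partial^3u}{\partial x_1\partial x_2^2}+O(h),
\end{equation*}
and symmetrically in $x_2$. Taking $u=P_Kw$, using (\ref{eq56}) and (\ref{eq8}) to replace the element averages of derivatives of $P_Kw$ by those of $w$, and writing $\Pi_Kw=\Pi_KP_Kw+\Pi_Kr_K$, the main contribution to $\sum_KS_{1,K}$ is
\begin{equation*}
\frac{h^2}{3}\int_\Omega\Bigl[\Pi_0\tfrac{\partial^3w}{\partial x_1^3}\,\Pi_0\tfrac{\partial^3w}{\partial x_1\partial x_2^2}+\Pi_0\tfrac{\partial^3w}{\partial x_2^3}\,\Pi_0\tfrac{\partial^3w}{\partial x_1^2\partial x_2}-\bigl(\Pi_0\tfrac{\partial^3w}{\partial x_1\partial x_2^2}\bigr)^2-\bigl(\Pi_0\tfrac{\partial^3w}{\partial x_1^2\partial x_2}\bigr)^2\Bigr]\mathrm{d}x,
\end{equation*}
which, since $\Pi_0$ converges strongly to the identity in $L^2(\Omega)$, equals $\tfrac{h^2}{3}I_w+o(h^2)$ with
\begin{equation*}
I_w:=\int_\Omega\Bigl[\tfrac{\partial^3w}{\partial x_1^3}\,\tfrac{\partial^3w}{\partial x_1\partial x_2^2}+\tfrac{\partial^3w}{\partial x_2^3}\,\tfrac{\partial^3w}{\partial x_1^2\partial x_2}-\bigl(\tfrac{\partial^3w}{\partial x_1\partial x_2^2}\bigr)^2-\bigl(\tfrac{\partial^3w}{\partial x_1^2\partial x_2}\bigr)^2\Bigr]\mathrm{d}x .
\end{equation*}

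The decisive point, and the reason this term is of higher order in two dimensions, is that $I_w=0$. Since $\partial\Omega$ is a union of segments parallel to the coordinate axes and $w\in H_0^2(\Omega)$, the tangential derivative of $\nabla w$ vanishes along $\partial\Omega$, hence $\partial^2w/\partial x_1\partial x_2=0$ on $\partial\Omega$. Integrating $\int_\Omega\frac{\partial^3w}{\partial x_1\partial x_2^2}\frac{\partial^3w}{\partial x_1^3}\,\mathrm{d}x$ by parts, first in $x_2$ and then in $x_1$, every boundary term carries the factor $\partial^2w/\partial x_1\partial x_2$ and therefore vanishes, giving
\begin{equation*}
\int_\Omega\frac{\partial^3w}{\partial x_1\partial x_2^2}\frac{\partial^3w}{\partial x_1^3}\,\mathrm{d}x=-\int_\Omega\frac{\partial^2w}{\partial x_1\partial x_2}\frac{\partial^4w}{\partial x_1^3\partial x_2}\,\mathrm{d}x=\int_\Omega\Bigl(\frac{\partial^3w}{\partial x_1^2\partial x_2}\Bigr)^2\mathrm{d}x,
\end{equation*}
and symmetrically $\int_\Omega\frac{\partial^3w}{\partial x_1^2\partial x_2}\frac{\partial^3w}{\partial x_2^3}\,\mathrm{d}x=\int_\Omega(\frac{\partial^3w}{\partial x_1\partial x_2^2})^2\,\mathrm{d}x$. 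Adding the two identities cancels the four terms of $I_w$, so $I_w=0$, and the main contribution to $\sum_KS_{1,K}$ is $o(h^2)$.

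It remains to bound all the lower-order contributions by $o(h^2)$; the key here is a refined estimate for $r_K$. By (\ref{eq56}), $\int_K\nabla^3r_K\,\mathrm{d}x=0$, while $\nabla^4r_K=\nabla^4w-\Pi_{0,K}\nabla^4w$ by (\ref{eq8}), so the Poincar\'e inequality on $K$ gives $|r_K|_{H^3(K)}\le Ch\,\|\nabla^4w-\Pi_{0,K}\nabla^4w\|_{L^2(K)}$. Combined with (\ref{eq5}), this yields $|r_K-\Pi_Kr_K|_{H^2(K)}\le Ch^2\|\nabla^4w-\Pi_{0,K}\nabla^4w\|_{L^2(K)}$ and $|\partial^3\Pi_Kr_K/\partial x_i^3|\le C\|\nabla^4w-\Pi_{0,K}\nabla^4w\|_{L^2(K)}$; since $\|\nabla^4w-\Pi_0\nabla^4w\|_{L^2(\Omega)}\to0$ as $h\to0$, a Cauchy--Schwarz over $\mathcal{T}_h$ shows that $\sum_KS_{2,K}$, the $\Pi_Kr_K$-part of $\sum_KS_{1,K}$, and the $O(h)$ correction in $\partial^3\Pi_K(P_Kw)/\partial x_i^3$ are all $o(h^2)$. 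Adding the estimates gives $(\nabla_h^2(w-\Pi_hw),\nabla_h^2\Pi_hw)_{L^2(\Omega)}=o(h^2)$, which proves the lemma with $\alpha_h\to0$. I expect the identity $I_w=0$ to be the main obstacle: the two integrations by parts have to be organized so that no boundary term survives, which is exactly where the homogeneous boundary conditions and the axis-parallel geometry of $\Omega$ enter, and without this exact cancellation the term would only be $O(h^2)$. The Poincar\'e refinement of the bound on $|r_K|_{H^3(K)}$ is the secondary technical point, since it is what upgrades the naive $O(h^2)$ estimates for the remainders to the required $o(h^2)$.
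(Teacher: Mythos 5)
Your proposal is correct and follows essentially the same route as the paper: the same splitting of $w$ into $P_Kw$ and $r_K=(I-P_K)w$, the application of Lemma \ref{lem1} to the polynomial part, the same integration-by-parts identity $\int_\Omega w_{x_1x_2^2}w_{x_1^3}\,\mathrm{d}x=\|w_{x_1^2x_2}\|_{L^2(\Omega)}^2$ using $w_{x_1x_2}|_{\partial\Omega}=0$, the same observation that $\partial^3(\Pi_Kw-w)/\partial x_1^3\approx-\overline{w_{x_1x_2^2}}$ supplies the cancelling negative term, and the same Poincar\'e-plus-density-of-piecewise-constants argument for the remainder. The only cosmetic difference is that you assemble the leading contribution into a single integral $I_w$ and show $I_w=0$, whereas the paper exhibits the cancellation pairwise between $I_{1,1}$ and $I_{1,2}$; the underlying identities are identical.
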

\begin{rem}
For the Adini element, this term is positive and of order $O(h^2)$. However, for the rectangular Morley element, we cannot get a similar result as the Adini element, which indicates a  difficulty for the analysis herein.
\end{rem}
\begin{proof}
Given $K \in \mathcal{T}_h$, let the interpolation operator $P_K$ be defined as in (\ref{eq56}), which leads to the following decomposition
\begin{eqnarray}
(\nabla_h^2(w-\Pi_h w),\nabla_h^2\Pi_h w)_{L^2(\Omega)}&=&\sum_{K\in\mathcal{T}_h} (\nabla_h^2(P_Kw-\Pi_K P_K w),\nabla_h^2\Pi_K w)_{L^2(K)}\nonumber\\
                                                         &&+\sum_{K\in\mathcal{T}_h} (\nabla_h^2(I-\Pi_K)(I-P_K)w,\nabla_h^2\Pi_K w)_{L^2(K)}\nonumber\\
                                                       &=&I_1+I_2.  \label{eq13}
\end{eqnarray}

We first analyze the first term $I_1$ on the right-hand side of (\ref{eq13}).
Let $u=P_Kw$ and $v=\Pi_Kw$ in Lemma \ref{lem1}. The first term $I_1$ on the right-hand side of  (\ref{eq13}) can be rewritten as
\begin{eqnarray*}
I_1&=&\sum_{i\neq j=1}^2\sum_{K\in\mathcal{T}_h} \frac{h^2}{3}\int_K\frac{\partial^3P_Kw}{\partial x_i\partial x_j^2}\frac{\partial^3\Pi_Kw}{\partial x_i^3}\,\mathrm{d}x_1\mathrm{d}x_2=I_{1,1}+I_{1,2}.
\end{eqnarray*}
It is straightforward to show that the first term of $I_1$ can be expressed as
\begin{eqnarray}
I_{1,1}&=&\sum_{K\in\mathcal{T}_h} \frac{h^2}{3}\int_K\frac{\partial^3P_Kw}{\partial x_1\partial x_2^2}\frac{\partial^3\Pi_Kw}{\partial x_1^3}\,\mathrm{d}x_1\mathrm{d}x_2\nonumber\\&=&
\sum_{K\in\mathcal{T}_h} \frac{h^2}{3}\int_K\frac{\partial^3w}{\partial x_1\partial x_2^2}\frac{\partial^3w}{\partial x_1^3}\,\mathrm{d}x_1\mathrm{d}x_2+
\sum_{K\in\mathcal{T}_h} \frac{h^2}{3}\int_K\frac{\partial^3(P_K-I)w}{\partial x_1\partial x_2^2}\frac{\partial^3\Pi_Kw}{\partial x_1^3}\,\mathrm{d}x_1\mathrm{d}x_2\nonumber\\&&+
\sum_{K\in\mathcal{T}_h} \frac{h^2}{3}\int_K\frac{\partial^3w}{\partial x_1\partial x_2^2}\frac{\partial^3(\Pi_Kw-w)}{\partial x_1^3}\,\mathrm{d}x_1\mathrm{d}x_2\nonumber\\&=&I_{1,1}^{(1)}+I_{1,1}^{(2)}+I_{1,1}^{(3)}.\label{eq33}
\end{eqnarray}

We are in the position to estimate three terms on the right-hand side of (\ref{eq33}).
Integrating by parts twice and using the fact that
\begin{equation*}
\frac{\partial^2 w}{\partial x_1\partial x_2}\frac{\partial^3 w}{\partial x_1^3}\bigg|_{\partial \Omega}=\frac{\partial^2 w}{\partial x_1\partial x_2}\frac{\partial^3 w}{\partial x_1^2\partial x_2}\bigg|_{\partial \Omega}=0
\end{equation*}
show that the first term $I_{1,1}^{(1)}$ on the right-hand side of (\ref{eq33}) is
\begin{eqnarray*}
I_{1,1}^{(1)}=\sum_{K\in\mathcal{T}_h} \frac{h^2}{3}\int_K\frac{\partial^3w}{\partial x_1\partial x_2^2}\frac{\partial^3w}{\partial x_1^3}\,\mathrm{d}x_1\mathrm{d}x_2=\frac{h^2}{3}||\frac{\partial^3w}{\partial x_1^2\partial x_2}||^2_{L^2(\Omega)}.
\end{eqnarray*}
Since $||\frac{\partial^3\Pi_K w}{\partial x_1^3}||_{L^2(K)}\leq C|w|_{H^3(K)}$ is bounded, it follows from (\ref{eq34}) that
\begin{eqnarray*}
I_{1,1}^{(2)}=\sum_{K\in\mathcal{T}_h} \frac{h^2}{3}\int_K\frac{\partial^3(P_K-I)w}{\partial x_1\partial x_2^2}\frac{\partial^3\Pi_Kw}{\partial x_1^3}\,\mathrm{d}x_1\mathrm{d}x_2\leq Ch^3|w|_{H^4(\Omega)}^2.
\end{eqnarray*}
For the Adini element, the third term $I_{1,1}^{(3)}$ is a higher order term, since its shape function space contains $P_3(K)$. Herein, to analyze the third term $I_{1,1}^{(3)}$, we need the following expansion, see (\ref{eq57}), up to a higher order term,
\begin{equation*}
(w-\Pi_Kw)\big|_K=\frac{h^3}{2!}\sum_{i\neq j=1}^2\overline{\frac{\partial^3w}{\partial x_i^2\partial x_j}}\varphi_{i,j}+Ch^4.
\end{equation*}
Since $\frac{\partial^3 \varphi_{1,2}}{\partial x_1^3}\big|_K=0$ and $\frac{\partial^3 \varphi_{2,1}}{\partial x_1^3}\big|_K=2h^{-3}$, it follows that
\begin{eqnarray*}
I_{1,1}^{(3)}&=&\sum_{K\in\mathcal{T}_h} \frac{h^2}{3}\int_K\frac{\partial^3w}{\partial x_1\partial x_2^2}\frac{\partial^3(\Pi_Kw-w)}{\partial x_1^3}\,\mathrm{d}x_1\mathrm{d}x_2\\&=&-\frac{h^2}{3} \sum_{K\in\mathcal{T}_h}\int_K\bigg(\frac{\partial^3w}{\partial x_1\partial x_2^2}\bigg)^2\mathrm{d}x_1\mathrm{d}x_2+Ch^3.
\end{eqnarray*}
A summary of these previous three equations yields
\begin{eqnarray*}
I_{1,1}&=&I_{1,1}^{(1)}+I_{1,1}^{(2)}+I_{1,1}^{(3)}\\&=&\frac{h^2}{3}||\frac{\partial^3w}{\partial x_1^2\partial x_2}||^2_{L^2(\Omega)}-\frac{h^2}{3} \sum_{K\in\mathcal{T}_h}\int_K\bigg(\frac{\partial^3w}{\partial x_1\partial x_2^2}\bigg)^2\mathrm{d}x_1\mathrm{d}x_2+Ch^3.
\end{eqnarray*}
A similar analysis applies to the second term of $I_1$, which implies
\begin{eqnarray*}
I_{1,2}&=&\frac{h^2}{3}||\frac{\partial^3w}{\partial x_1\partial x_2^2}||^2_{L^2(\Omega)}-\frac{h^2}{3} \sum_{K\in\mathcal{T}_h}\int_K\bigg(\frac{\partial^3w}{\partial x_1^2\partial x_2}\bigg)^2\mathrm{d}x_1\mathrm{d}x_2+Ch^3.
\end{eqnarray*}
This leads to
\begin{equation}
 I_1=I_{1,1}+I_{1,2}=0+Ch^3.  \label{eq35}
\end{equation}
We turn to the second term $I_2$ on the right-hand side of (\ref{eq13}), which  can be estimated by the error estimates of (\ref{eq34}) as
\begin{equation*}
|I_2|\leq Ch \sum_{K\in\mathcal{T}_h}||\nabla_h^3(I-P_K)w||_{L^2(K)}|w|_{H^3(\Omega)}.
\end{equation*}
The definition of the projection operator $P_K$ gives
\begin{equation*}
\int_K \nabla^3(I-P_K)w\;\mathrm{d}x_1\mathrm{d}x_2=0.
\end{equation*}
By the  Poincare inequality, and the commuting property of (\ref{eq8}),
\begin{eqnarray}
|I_2|&\leq & Ch^2 \sum_{K\in\mathcal{T}_h}||\nabla_h^4(I-P_K)w||_{L^2(K)}|w|_{H^3(\Omega)}\nonumber\\&\leq&Ch^2||(I-\Pi_0)\nabla_h^4 w||_{L^2(\Omega)}|w|_{H^3(\Omega)}.\label{eq25}
\end{eqnarray}
Since the piecewise constant functions are dense in the space $L^2(\Omega)$,
\begin{equation}
||(I-\Pi_0)\nabla_h^4 w||_{L^2(\Omega)}\rightarrow\; 0,\;\;when\;\;h\rightarrow \;0. \label{eq31}
\end{equation}
A summary of (\ref{eq35}), (\ref{eq25}) and (\ref{eq31}) completes the proof.
\end{proof}
\begin{rem}
Comparing with the Adini element for the fourth order eigenvalue problem \cite{HuHuang}, the proof herein weakens the regularity condition from $H^{4+s}$ with $(0<s\leq1)$ to $H^4$.
\end{rem}

\section{Lower bounds of eigenvalues by the three-dimensional rectangular Morley element }
The section also  uses the identity for the errors of eigenvalue from \cite{ArmDuran,ZhangYangChen}, see also \cite{HuHuangLin}. However, for the three dimensional case, the last term on the right hand side of (\ref{eq15}) is negative and of order $O(h^2)$. This causes a main difficulty. To overcome this difficulty, we propose a new decomposition of the first term by using the canonical interpolation operator defined as in (\ref{eq4}), see more details in (\ref{eq19}) below.
\subsection{The three-dimensional rectangle Morley element }
Let $\mathcal{T}_h$ be a regular uniform rectangular triangulation of the domain $\Omega\subset \mathbb{R}^3$ in three dimensions. Given $K\in \mathcal{T}_h$, let $(x_{1,c},x_{2,c},x_{3,c})$ be the center of $K$, the meshsize $h$ and affine mapping:\\
\begin{equation} \label{eq2}  \xi_1=\frac{x_1-x_{1,c}}{h},\quad\xi_2=\frac{x_2-x_{2,c}}{h},\quad\xi_3=\frac{x_3-x_{3,c}}{h}\;\; \text{for any}\; (x_1,x_2,x_3)\in K.\end{equation}
On element $K$, the shape function space of the rectangular Morley element reads\\
\begin{equation}   P_T(K):=P_2(K)+\text{span}\{x_1^3,x_2^3,x_3^3,x_1x_2x_3\}.\end{equation}
The nodal parameters are: for any $v\in C^1(K)$,
\begin{equation}
D_T(v)=\bigg(v(a_i),\;\;\frac{1}{|F_j|}\int_{F_j} \frac{\partial v}{\partial \nu_{F_j}}\;\mathrm{d}s\bigg),\;\;i=1,\ldots,8,\;j=1,\ldots,6, \label{eq55}
\end{equation}
\begin{figure}
\begin{center}
\setlength{\unitlength}{2.5cm}
\begin{picture}(4,2)

\put(1,0.5){\line(1,0){2}} \put(1,0.5){\line(0,1){1}}
\put(3,0.5){\line(0,1){1}} \put(1,1.5){\line(1,0){2}}
\put(1,0.5){\circle*{0.1}}
\put(1,1.5){\circle*{0.1}}
\put(3,0.5){\circle*{0.1}}
\put(3,1.5){\circle*{0.1}}
\put(1,1.5){\line(1,1){0.3}}
\put(3,1.5){\line(1,1){0.3}}
\put(1.3,1.8){\line(1,0){2}}
\put(1.3,1.8){\circle*{0.1}}
\put(3.3,1.8){\circle*{0.1}}
\put(3,0.5){\line(1,1){0.3}}
\put(3.3,0.8){\circle*{0.1}}
\put(3.3,0.8){\line(0,1){1}}
\multiput(1,0.5)(0.18,0.18){2}{\line(1,1){0.15}}
\multiput(1.3,0.8)(0.14,0){15}{\line(1,0){0.1}}
\multiput(1.3,0.8)(0,0.14){7}{\line(0,1){0.1}}
\put(1.3,0.8){\circle*{0.1}}
\put(2.2,0.65){\vector(0,-1){0.3}}
\put(2.2,1.65){\vector(0,1){0.3}}
\put(1.15,1.15){\vector(-1,0){0.3}}
\put(3.15,1.15){\vector(1,0){0.3}}
\put(2,1){\vector(-1,-1){0.25}}
\put(2.3,1.3){\vector(1,1){0.25}}
\put(2.2,0.25){$\iint$}
\put(2.2,1.95){$\iint$}
\put(0.65,1.15){$\iint$}
\put(3.45,1.15){$\iint$}
\put(1.5,0.7){$\iint$}
\put(2.6,1.6){$\iint$}
\linethickness{0.6mm}
\end{picture}
\end{center}
\caption{degrees of freedom }
\end{figure}
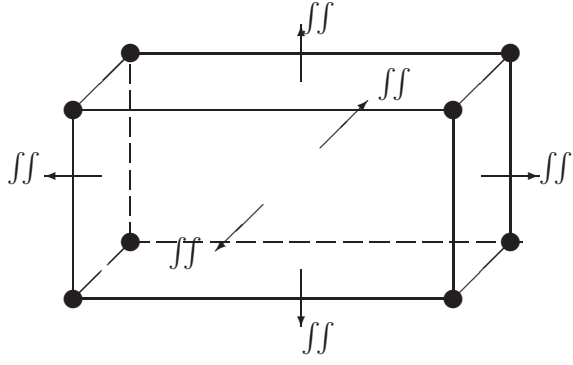
where $a_i$ are vertices of $K$ and $F_j$ are faces of $K$, see Figure 2. The $P_T$-unisolvence of  $D_T$ can be found in \cite{ShiWang}. The nonconforming rectangular Morley element space is then defined by
\begin{eqnarray*}
V_h:&=&\{v \in L^2(\Omega):\;v|_K \in P_T(K), \forall K \in \mathcal{T}_h, v\;is\;continuous\;at\;all\;internal\;vertices\; and\\&&\; vanishes\; at\; all\; boundary\; vertices,\;and \int_F \frac{\partial v}{\partial \nu_F}\,\mathrm{d}s\;is\;continuous\;at\;internal\;faces\;\\&& F\;and\;vanishes\;at\;boundary\;faces\; of \;\mathcal{T}_h\}.
\end{eqnarray*}
The finite element approximation of Problem (\ref{eq1}) reads: Find $\lambda_h \in \mathbb{R}$ and $w_h \in V_h$, such that
\begin{equation}
(\nabla_h^2 w_h,\nabla_h^2 v_h)_{L^2(\Omega)}=\lambda_h(w_h,v_h)_{L^2(\Omega)},\; \text{for any}\; v_h \in V_h. \label{eq46}
\end{equation}
We recall that the operator $\nabla_h^2$ is the discrete counterpart of $\nabla^2$, which is defined element by element since the discrete space $V_h$ is nonconforming.\\
\subsection{Lower bounds of eigenvalues by the three-dimensional rectangular Morley element}
In this section, we show that the approximate eigenvalues are smaller than the exact ones in three-dimensional case.

\begin{thm}
Let $(\lambda,u)$ and $(\lambda_h,u_h)$ be the solutions to (\ref{eq1}) and (\ref{eq46}), respectively, and assume that $u \in H_0^2(\Omega)\bigcap H^4(\Omega)$,\;then,
\[\lambda_h \leq \lambda,\]
provided that $h$ is small enough.
\end{thm}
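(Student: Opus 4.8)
The plan is to proceed in parallel with the proof of Theorem 2.1, starting from the eigenvalue error identity of \cite{ArmDuran,ZhangYangChen} (the three-dimensional counterpart of (\ref{eq37})). The second and third terms on its right-hand side, namely $-\lambda_h(\Pi_h u-u_h,\Pi_h u-u_h)_{L^2(\Omega)}$ and $\lambda_h(\|\Pi_h u\|_{L^2(\Omega)}^2-\|u\|_{L^2(\Omega)}^2)$, would be handled exactly as in the proof of Theorem 2.1: using the three-dimensional interpolation estimate (\ref{eq5}) and the bound $\|\Pi_h u-u_h\|_{L^2(\Omega)}\le Ch^2$, they are $O(h^4)$ and $O(h^3)$ respectively, hence of higher order than $h^2$ and irrelevant to the sign of $\lambda-\lambda_h$.

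First I would pin down the genuinely new difficulty. Applying the two-dimensional decomposition (\ref{eq80}) verbatim would reduce matters to analysing $(\nabla_h^2(u-\Pi_h u),\nabla_h^2\Pi_h u)_{L^2(\Omega)}$; but the three-dimensional analogue of Lemma \ref{lem2} shows that this term is no longer of higher order — it is negative and of exact order $h^2$, the obstruction arising from the additional shape function $x_1x_2x_3$ and the associated mixed third derivative $\partial^3/\partial x_1\partial x_2\partial x_3$. Thus (\ref{eq80}) by itself does not display $\lambda-\lambda_h$ as a nonnegative leading term plus a higher-order remainder, which is why a different argument is needed.

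To get around this I would use a novel decomposition of the first term $|u-u_h|_h^2$ built from the canonical interpolation $\Pi_h$. Concretely: (i) establish a three-dimensional refined interpolation identity in the spirit of Lemmas \ref{lem1} and \ref{lem2}, by Taylor-expanding $u-\Pi_K u$ on $P_4(K)+\text{span}\{x_1x_2x_3\}$ against the bubble functions attached to the degrees of freedom (\ref{eq55}); (ii) write $u-u_h=(u-\Pi_h u)+(\Pi_h u-u_h)$ and expand $|u-u_h|_h^2+2a_h(u-\Pi_h u,u_h)$, obtaining $\|\nabla_h^2(u-\Pi_h u)\|_{L^2(\Omega)}^2+\|\nabla_h^2(\Pi_h u-u_h)\|_{L^2(\Omega)}^2$ plus a cross term; (iii) regroup, using the refined identity of step (i), the negative $h^2$-part of the cross term together with the positive $h^2$-leading part of $\|\nabla_h^2(u-\Pi_h u)\|_{L^2(\Omega)}^2$. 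After this regrouping one is left with $\|\nabla_h^2(\Pi_h u-u_h)\|_{L^2(\Omega)}^2\ge 0$, a residual leading term that I expect to be nonnegative, and higher-order remainders controlled by (\ref{eq5}), (\ref{eq34}) and density of piecewise constants as in (\ref{eq31}). Finally, as in Theorem 2.1, I would prove a saturation condition $h^2\le C|u-u_h|_h^2$ for the three-dimensional rectangular Morley element and use it to conclude that, for $h$ small enough, the sign of $\lambda-\lambda_h$ is governed by these nonnegative leading terms, whence $\lambda_h\le\lambda$.

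The hard part will be step (iii): one must show that, after extracting via the refined expansion the leading $O(h^2)$ contributions of both $\|\nabla_h^2(u-\Pi_h u)\|_{L^2(\Omega)}^2$ and of $(\nabla_h^2(u-\Pi_h u),\nabla_h^2\Pi_h u)_{L^2(\Omega)}$, the net $O(h^2)$ coefficient is nonnegative — i.e.\ the positive contributions dominate the negative one coming from the $x_1x_2x_3$ mode — and, in tandem, that the saturation condition genuinely holds for this element. The remaining estimates (integration by parts against the boundary conditions of $H_0^2(\Omega)$, interpolation bounds, and the Poincaré/commuting-diagram arguments used for $I_2$ in Lemma \ref{lem2}) are then routine, as in Section 2.
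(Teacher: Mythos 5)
Your overall architecture coincides with the paper's: the identity (\ref{eq15}), the $O(h^4)$ and $O(h^3)$ bounds for its second and third terms, the decomposition (\ref{eq19}) of $|u-u_h|_h^2+2a_h(u-\Pi_h u,u_h)$, a refined interpolation identity in the spirit of Lemmas \ref{lem4}--\ref{lem5}, and a regrouping of the negative cross term against part of $\|\nabla_h^2(u-\Pi_h u)\|_{L^2(\Omega)}^2$. But the proposal stops exactly where the proof lives: you write that after regrouping one is left with ``a residual leading term that I expect to be nonnegative,'' and that expectation is the entire theorem. The paper's mechanism is a precise arithmetic cancellation. It splits $\|\nabla_h^2(u-\Pi_h u)\|_{L^2(\Omega)}^2=J_1+J_2$ into the pure second derivatives ($J_1$) and twice the mixed ones ($J_2$), and pairs \emph{only} $J_2$ with the cross term $J_3$. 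Lemma \ref{lem5} computes $J_3=-\frac{2h^2}{3}\sum_{i\neq j\neq k}\sum_K\int_K\frac{\partial^3u}{\partial x_i\partial x_j^2}\frac{\partial^3u}{\partial x_i\partial x_k^2}\,\mathrm{d}x+Ch^3$; the surviving negative part comes from the fact that in three dimensions each $i$ has two partners $j\neq i$, so the diagonal cancellation that makes $I_1=Ch^3$ in (\ref{eq35}) fails --- not, as you suggest, from the $x_1x_2x_3$ mode, whose mixed-derivative contribution still integrates to zero in Lemma \ref{lem4}. Cauchy--Schwarz then gives $J_3\geq-\frac{2h^2}{3}\sum_{i\neq j}\sum_K\int_K(\frac{\partial^3u}{\partial x_i\partial x_j^2})^2\,\mathrm{d}x+Ch^3$, while the Taylor expansion of the mixed second derivatives gives $J_2=2h^2\sum_{i\neq j}\sum_K\int_K(\frac{\partial^3u}{\partial x_i^2\partial x_j})^2\xi_i^2\,\mathrm{d}x+O(h^4)$; since $\overline{\xi_i^2}=\frac13$, i.e.\ $\int_K(\xi_i^2-\frac13)\,\mathrm{d}x=0$, the two leading constants match exactly and $J_2+J_3\geq Ch^3$ (Lemma \ref{lem6}). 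Without verifying this exact match of $2\,\overline{\xi_i^2}=\frac23$ against the $\frac23$ produced by Cauchy--Schwarz, the sign of the $O(h^2)$ coefficient is not determined and the argument does not close.

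Second, your saturation condition is attached to the wrong quantity. Once $J_2+J_3$ has been consumed by the cancellation above, the only strictly positive $O(h^2)$ term available to absorb the $O(h^3)$ remainders is $J_1$ (together with $\|\nabla_h^2(\Pi_h u-u_h)\|_{L^2(\Omega)}^2\geq0$). A bound of the form $h^2\leq C|u-u_h|_h^2$ does not obviously yield $\beta h^2\leq J_1$, since $|u-u_h|_h^2$ also contains $J_2$ and $\|\nabla_h^2(\Pi_h u-u_h)\|^2_{L^2(\Omega)}$ with a cross term. The paper instead proves $\beta h^2\leq J_1$ directly (Lemma \ref{lem3}), using $\frac{\partial^3(\Pi_h u)}{\partial x_i^2\partial x_j}=0$, an inverse estimate, the projection $P_K$, and the observation that $\sum_{i\neq j}\|\frac{\partial^3u}{\partial x_i^2\partial x_j}\|_{L^2(\Omega)}^2=0$ would force $u\equiv0$ under the boundary conditions. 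You would need to supply this (or an equivalent) lower bound; as stated, your step (iii) together with your proposed saturation condition does not pin down the sign of $\lambda-\lambda_h$.
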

\begin{proof}
We use the identity for the errors of the eigenvalue from \cite{ArmDuran,ZhangYangChen}, see also \cite{HuHuangLin}.
\begin{eqnarray}
\lambda-\lambda_h &=&|u-u_h|_h^2-\lambda_h(\Pi_h u-u_h,\Pi_h u-u_h)_{L^2(\Omega)}\nonumber\\&&
                    +\lambda_h\big(||\Pi_h u||_{L^2(\Omega)}^2-||u||_{L^2(\Omega)}^2\big)+2a_h(u-\Pi_h u,u_h),\label{eq15}
\end{eqnarray}
We can bound the second term by (\ref{eq51})
\begin{equation} (\Pi_h u-u_h,\Pi_h u-u_h)_{L^2(\Omega)}\leq C h^4\label{eq52} \end{equation}
and the third term as
\begin{eqnarray}
|||\Pi_h u||_{L^2(\Omega)}^2-||u||_{L^2(\Omega)}^2|&=&|(\Pi_h u,\Pi_h u)_{L^2(\Omega)}-(u,u)_{L^2(\Omega)}|\nonumber\\
                                               &=&|(\Pi_h u-u,\Pi_h u)_{L^2(\Omega)}+(u,\Pi_h u-u)_{L^2(\Omega)}|\nonumber\\
                                               &\leq&Ch^3|u|_{H^3(\Omega)}(||\Pi_h u||_{L^2(\Omega)}+||u||_{L^2(\Omega)})\nonumber\\
                                               &\leq&Ch^3|u|_{H^3(\Omega)}(||\Pi_h u-u||_{L^2(\Omega)}+2||u||_{L^2(\Omega)})\nonumber\\
                                               &\leq&Ch^3|u|_{H^3(\Omega)}^2.\label{eq53}
\end{eqnarray}
Since the fourth term of (\ref{eq15}) is negative and of order $O(h^2)$, we provide a new method to deal with it.
A combination of  the first term and the fourth term of (\ref{eq15}) allows for the following decomposition, like (\ref{eq80})
\begin{equation}
\begin{split}
|u-u_h|^2_h+2a_h(u-\Pi_h u,u_h)&=||\nabla_h^2(u-\Pi_h u)||^2_{L^2(\Omega)}+||\nabla_h^2(\Pi_h u-u_h)||^2_{L^2(\Omega)}\\&\quad+2(\nabla_h^2(u-\Pi_h u),\nabla_h^2\Pi_h u)_{L^2(\Omega)}.\label{eq19}
\end{split}
\end{equation}
The first term of (\ref{eq19}) can be further expressed as
\begin{eqnarray*}
||\nabla_h^2(u-\Pi_h u)||^2_{L^2(\Omega)}&=&\bigg[\int_\Omega\bigg(\frac{\partial^2(u-\Pi_h u)}{\partial x_1^2}\bigg)^2\,\mathrm{d}x_1\mathrm{d}x_2\mathrm{d}x_3+\int_\Omega\bigg(\frac{\partial^2(u-\Pi_h u)}{\partial x_2^2}\bigg)^2\,\mathrm{d}x_1\mathrm{d}x_2\mathrm{d}x_3\\&&+\int_\Omega\bigg(\frac{\partial^2(u-\Pi_h u)}{\partial x_3^2}\bigg)^2\,\mathrm{d}x_1\mathrm{d}x_2\mathrm{d}x_3\bigg]+2\bigg[\int_\Omega\bigg(\frac{\partial^2(u-\Pi_h u)}{\partial x_1\partial x_2}\bigg)^2\,\mathrm{d}x_1\mathrm{d}x_2\mathrm{d}x_3\\&&+\int_\Omega\bigg(\frac{\partial^2(u-\Pi_h u)}{\partial x_1\partial x_3}\bigg)^2\,\mathrm{d}x_1\mathrm{d}x_2\mathrm{d}x_3+\int_\Omega\bigg(\frac{\partial^2(u-\Pi_h u)}{\partial x_2\partial x_3}\bigg)^2\,\mathrm{d}x_1\mathrm{d}x_2\mathrm{d}x_3\bigg]\\&=&J_1+J_2.
\end{eqnarray*}
By definition of $P_T$, we have
\begin{equation}
\frac{\partial^3 \Pi_h u|_K}{\partial x_i^2\partial x_j}=0, \;\text{for any}\; K \in \mathcal{T}_h\; and\; \Pi_h u\in V_h,\;\; i,j=1,2,3,\;i\neq j. \label{eq39}
\end{equation}
Based on this fact, we can show that
\begin{equation}\label{eq58} \beta h^2\leq J_1\;\;  with \;the\; constant\;\beta>0.\end{equation}
See Lemma \ref{lem3} below for a detailed proof of (\ref{eq58}). To prove the final result, we denote $J_3:=2(\nabla_h^2(u-\Pi_h u),\nabla_h^2\Pi_h u)_{L^2(\Omega)}$. It will be proved in Lemma \ref{lem6} below that
\begin{equation}
J_2+J_3\geq 0+Ch^3. \label{eq54}
\end{equation}
By means of  $J_1$, $J_2$, $J_3$, (\ref{eq52})$-$(\ref{eq54}), it follows that (\ref{eq19}) is non-negative and of order $O(h^2)$. Therefore, the sign of $\lambda-\lambda_h$ is non-negative.
\end{proof}
\begin{lem}\label{lem3}
There holds that
\begin{equation*}
\beta h^2\leq J_1\;\;  with \;the\; constant\;\beta>0.
\end{equation*}
\end{lem}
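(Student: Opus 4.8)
\textbf{Proof proposal for Lemma \ref{lem3}.}

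The plan is to obtain the lower bound $\beta h^2 \le J_1$ by extracting, from each of the three terms $\int_\Omega (\partial^2(u-\Pi_h u)/\partial x_i^2)^2$, a leading contribution of size $h^2$ with a positive coefficient. The key structural fact is (\ref{eq39}): on each $K$ the function $\Pi_h u$ has vanishing third derivatives of the form $\partial^3/\partial x_i^2\partial x_j$, so only the ``pure'' third derivatives $\partial^3/\partial x_i^3$ survive in $\Pi_h u$. First I would reduce to the polynomial case by writing $u = P_K w$ plus a remainder, exactly as in the proof of Lemma \ref{lem2}: replacing $u$ by $P_K u$ (the local $P_4$ approximation from (\ref{eq7})) introduces only $O(h^3)$ errors by (\ref{eq34}), which are negligible against the target $h^2$. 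Then on each element I would Taylor-expand $u - \Pi_K u$ along the lines of (\ref{eq57}) and its three-dimensional analogue, keeping track of which monomials in $\xi$ appear after taking $\partial^2/\partial x_i^2$.

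Second, I would compute $\partial^2(u-\Pi_K u)/\partial x_i^2$ explicitly in scaled coordinates. As in (\ref{eq24}), this second derivative is a sum: a piece of order $h$ coming from the cubic bubbles $\varphi_{i,j}$ (linear in $\xi_i$ or $\xi_j$), plus pieces of order $h^2$ from the quartic bubbles. The crucial point is that, because of (\ref{eq39}), the interpolant contributes nothing to the mixed third derivatives, so the $O(h)$ term $\tfrac{h}{2}\overline{\partial^3 u/\partial x_i\partial x_j^2}\,2\xi_j$ (the contribution of $\varphi_{j,i}$ restricted to $\partial^2/\partial x_i^2$) is \emph{not} cancelled. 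Squaring and integrating over $K$, the cross terms between the $O(h)$ and $O(h^2)$ pieces vanish by parity of $\xi$ on the symmetric element, and the square of the $O(h)$ piece yields, after the elementary integral $\int_K \xi_j^2 = |K|/3$, a term $\tfrac{h^2}{3}\int_K (\partial^3 u/\partial x_i\partial x_j^2)^2$ (summed over the two indices $j \ne i$), plus $O(h^4)$ from the quartic pieces. Summing over $i$ and over all $K$, and then over $i \ne j$, I obtain
\[
J_1 = \frac{h^2}{3}\sum_{i\ne j}\left\|\frac{\partial^3 u}{\partial x_i\partial x_j^2}\right\|_{L^2(\Omega)}^2 + O(h^3).
\]
Since $u \in H^4(\Omega)$ is a fixed nonzero eigenfunction, the sum of these mixed third-derivative norms is a fixed positive constant (it cannot vanish identically, as that would force $u$ to be a low-degree polynomial, incompatible with the homogeneous boundary conditions and $u \not\equiv 0$); call twice a third of it $2\beta$. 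Choosing $h$ small enough that the $O(h^3)$ remainder is below $\beta h^2$ in absolute value gives $J_1 \ge \beta h^2$.

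The main obstacle I anticipate is \emph{bookkeeping the three-dimensional bubble expansion} of $u - \Pi_K u$ correctly: unlike the two-dimensional case worked out in (\ref{eq57}), the three-dimensional shape space $P_2(K) + \mathrm{span}\{x_1^3,x_2^3,x_3^3,x_1x_2x_3\}$ has a richer set of ``missing'' quartic and cubic monomials, so one must carefully identify all bubble functions vanishing at the eight vertices and having zero mean normal derivative on the six faces, and verify which of their second derivatives $\partial^2/\partial x_i^2$ are odd versus even in each $\xi$ variable. The saving grace is that only parity on the symmetric reference element is needed to kill the cross terms, and only the single integral $\int \xi_j^2 = |K|/3$ is needed for the leading coefficient; the quartic-bubble contributions need only be controlled at the crude level $O(h^4)$, so no delicate cancellation among them is required. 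A secondary point to handle cleanly is the passage from $u$ to $P_K u$ inside a squared quantity: one expands $\|\nabla_h^2(u-\Pi_h u)\|^2 = \|\nabla_h^2(P_K u - \Pi_K P_K u)\|^2 + \text{cross} + \|\nabla_h^2(I-\Pi_K)(I-P_K)u\|^2$ and bounds the last two by Cauchy--Schwarz using (\ref{eq5}) and (\ref{eq34}), which costs only $O(h^3)$.
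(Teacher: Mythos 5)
Your proposal reaches the correct conclusion but by a genuinely different route from the paper. The paper never computes the leading term of $J_1$: it argues in reverse, using (\ref{eq39}) to write $\partial^3(u-\Pi_h u)/\partial x_i^2\partial x_j=\partial^3 u/\partial x_i^2\partial x_j$ elementwise, then a triangle inequality through $P_K u$ plus an inverse estimate on the polynomial $P_K u-\Pi_h u$ to bound $\sum_{i\neq j}\|\partial^3 u/\partial x_i^2\partial x_j\|_{L^2(\Omega)}^2$ from above by $Ch^{-2}J_1$ plus terms that vanish faster (controlled via the Poincar\'e inequality and $\|(I-\Pi_0)\nabla_h^4 u\|_{L^2(\Omega)}\to 0$), and finally the same non-degeneracy argument you invoke: if all $\partial^3 u/\partial x_i^2\partial x_j$ ($i\neq j$) vanished, $u$ would have a separated form (not necessarily polynomial --- the paper allows arbitrary $f_i(x_i)$) incompatible with the clamped boundary conditions unless $u\equiv 0$. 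That route entirely avoids your main anticipated obstacle, the three-dimensional bubble bookkeeping, at the price of not identifying an explicit constant $\beta$. Your explicit expansion is viable and the error bookkeeping through $P_K$ is fine, but it contains one imprecision you must repair: for fixed $i$, the $O(h)$ part of $\partial^2(u-\Pi_K u)/\partial x_i^2$ is $h\big[\big(\sum_{j\neq i}\overline{\partial^3 u/\partial x_i\partial x_j^2}\big)\xi_i+\sum_{j\neq i}\overline{\partial^3 u/\partial x_i^2\partial x_j}\,\xi_j\big]$, so the \emph{two} bubbles $\tilde{\varphi}_{j,i}$, $j\neq i$, both contribute multiples of the \emph{same} coordinate $\xi_i$; squaring therefore produces non-vanishing cross products $2\,\overline{\partial^3 u/\partial x_i\partial x_j^2}\;\overline{\partial^3 u/\partial x_i\partial x_k^2}\int_K\xi_i^2$ that your displayed identity for $J_1$ omits (this does not occur in two dimensions, which is why (\ref{eq24}) is misleading here). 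Since these extra terms assemble into the perfect square $\big(\sum_{j\neq i}\overline{\partial^3 u/\partial x_i\partial x_j^2}\big)^2\geq 0$, the leading term of $J_1$ is still bounded below by $\frac{h^2}{3}\sum_{K}|K|\sum_{i\neq j}\big(\overline{\partial^3 u/\partial x_i^2\partial x_j}\big)^2$, which tends to $\frac{h^2}{3}\sum_{i\neq j}\|\partial^3 u/\partial x_i^2\partial x_j\|_{L^2(\Omega)}^2>0$, so your conclusion survives; but the identity as you wrote it is not correct and should be replaced by this inequality.
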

\begin{proof}
Given $K\in\mathcal{T}_h$. Let $P_K$ be defined as in (\ref{eq7}). By (\ref{eq39}), it follows from the triangle inequality and the piecewise inverse estimate that
\begin{eqnarray*}
\sum_{i\neq j=1}^3||\frac{\partial^3u}{\partial x_i^2\partial x_j}||^2_{L^2(\Omega)}&=&\sum_{i\neq j=1}^3\sum_{K\in\mathcal{T}_h}||\frac{\partial^3(u-\Pi_h u)}{\partial x_i^2\partial x_j}||^2_{L^2(K)}\\&\leq &2\sum_{i\neq j=1}^3\sum_{K\in\mathcal{T}_h}\bigg(||\frac{\partial^3(u-P_K u)}{\partial x_i^2\partial x_j}||^2_{L^2(K)}+||\frac{\partial^3(P_K u-\Pi_h u)}{\partial x_i^2\partial x_j}||^2_{L^2(K)}\bigg)\\&\leq C &\sum_{K\in\mathcal{T}_h}||\nabla_h^3(u-P_K u)||^2_{L^2(K)}+h^{-2}\sum_{i=1}^3\sum_{K\in\mathcal{T}_h}||\frac{\partial^2(P_K u-\Pi_h u)}{\partial x_i^2}||^2_{L^2(K)}.
\end{eqnarray*}
Using the triangle inequality and the error estimate (\ref{eq34}) leads to
\begin{equation*}
\sum_{i\neq j=1}^3||\frac{\partial^3u}{\partial x_i^2\partial x_j}||^2_{L^2(\Omega)}\leq C \sum_{K\in\mathcal{T}_h}||\nabla_h^3(u-P_K u)||^2_{L^2(K)}+h^{-2}\sum_{i=1}^3\sum_{K\in\mathcal{T}_h}||\frac{\partial^2(u-\Pi_h u)}{\partial x_i^2}||^2_{L^2(K)}.
\end{equation*}
Using the Poincare inequality and the definition of $P_K$ yields
\begin{equation*}
\sum_{K\in\mathcal{T}_h}||\nabla_h^3(u-P_K u)||^2_{L^2(K)}\leq Ch^2||(I-\Pi_0)\nabla_h^4 w||_{L^2(\Omega)}.
\end{equation*}
Since the piecewise constant functions are dense in the space $L^2(\Omega)$,
\begin{equation*}
||(I-\Pi_0)\nabla_h^4 w||_{L^2(\Omega)}\rightarrow\; 0\;\;when\;\;h\rightarrow \;0.
\end{equation*}
Finally, it follows that
\begin{equation}
h^2\sum_{i\neq j=1}^3||\frac{\partial^3u}{\partial x_i^2\partial x_j}||^2_{L^2(\Omega)}\leq C \sum_{i=1}^3\sum_{K\in\mathcal{T}_h}||\frac{\partial^2(u-\Pi_h u)}{\partial x_i^2}||^2_{L^2(K)}.\label{eq59}
\end{equation}
Next, we prove that $\sum_{i\neq j=1}^3||\frac{\partial^3u}{\partial x_i^2\partial x_j}||^2\neq 0$. In fact, since $u\in H_0^2(\Omega)$, if $||\frac{\partial^3 u}{\partial x_i^2\partial x_j}||_{L^2(\Omega)}=0,\;(i\neq j=1,2,3)$, $u$ should be of the form
\begin{equation*}
u(x_1,x_2,x_3)=\sum_{i=1}^3e_i f_i(x_i)+ax_1x_2x_3+bx_1x_2+cx_1x_3+dx_2x_3,
\end{equation*}
for some functions $f_i(x_i)$ with respect to variable $x_i$, and some constants $e_i, a, b, c, d$. The boundary condition concludes $u$ and its normal derivative vanish on the boundary of $\Omega$. This implies $u\equiv 0$, which contradicts with $u\neq 0$.
\end{proof}
\subsection{A refined property of the interpolation operator}
Given $K\in \mathcal{T}_h$, for the sake of simplicity , we  define eighteen bubble functions with respect to the degrees of freedom defined as in (\ref{eq55}) as follows
 \begin{eqnarray}
 \begin{array}{lll}
 \tilde{\varphi}_{i,j}=\xi_i^2\xi_j-\frac{4}{3}\xi_j+\frac{\xi_j^3}{3},\quad i,j=1,2,3,\;i\neq j,\\
 \tilde{\psi}_i=(\xi_i^2-1)^2,\quad i=1,2,3,\\
 \tilde{p}_{i,j}=\xi_i^2+\xi_j^2-\frac{\xi_i^3}{3}-\frac{\xi_j^3}{3}-\frac{1}{3},\quad i=1,2,\;j=2,3,\;i\neq j,\\
 \tilde{q}_{i,j}=\xi_i^3\xi_j-\xi_i\xi_j,\quad i,j=1,2,3,\,i\neq j. \label{eq44}
 \end{array}
 \end{eqnarray}
In fact, it can be checked that
 \begin{eqnarray*}
 &&\left\{
 \begin{array}{lll}
 \tilde{\varphi}_{i,j}(a_k)=0,\quad i,j=1,2,3,\;i\neq j,\,k=1,\ldots,8,\\
 \int_{F_l} \frac{\partial\tilde{\varphi}_{i,j}}{\partial\nu_{F_l}}\;\mathrm{d}s=0,\quad i,j=1,2,3,\;i\neq j,\,l=1,\ldots,6,
  \end{array}
 \right.\\
 &&\left\{
 \begin{array}{lll}
\tilde{\psi}_i(a_k)=0,\quad i=1,2,3,\,k=1,\ldots,8,\\
 \int_{F_l} \frac{\partial\tilde{\psi}_i}{\partial\nu_{F_l}}\;\mathrm{d}s=0,\quad i=1,2,3,\,l=1,\ldots,6,
  \end{array}
 \right.\\
 &&\left\{
 \begin{array}{lll}
 \tilde{p}_{i,j}(a_k)=0,\quad i=1,2,\;j=2,3,\;i\neq j,\;k=1,\ldots,8,\\
 \int_{F_l} \frac{\partial\tilde{p}_{i,j}}{\partial\nu_{F_l}}\;\mathrm{d}s=0,\quad i=1,2,\;j=2.3,\;i\neq j,\,l=1,\ldots,6,
  \end{array}
 \right.\\
 &&\left\{
 \begin{array}{lll}
 \tilde{q}_{i,j}(a_k)=0,\quad i,j=1,2,3,\;i\neq j,\,k=1,\ldots,8,\\
 \int_{F_l} \frac{\partial\tilde{q}_{i,j}}{\partial\nu_{F_l}}\;\mathrm{d}s=0,\quad i,j=1,2,3,\;i\neq j,\,l=1,\ldots,6,
 \end{array}
 \right.
 \end{eqnarray*}
 where $a_k$ are vertices of $K$, and $F_l$ are faces of $K$ .

 In the next lemma, we follow the idea of \cite{HuShi1} to analyze a new refined property for the interpolation operator, which is a basis for the analysis of the term $a_h(u-\Pi_h u,u_h)$.
\begin{lem}\label{lem4}
Given $K\in \mathcal{T}_h$, for any $u\in P_4(K)$ and $ v\in P_T(K)$, there holds that
\begin{eqnarray*}
(\nabla^2(u-\Pi_Ku),\nabla^2v)_{L^2(K)}&=&\frac{h^2}{3}\sum_{i\neq j=1}^3\int_K\frac{\partial^3u}{\partial x_i\partial x_j^2}\frac{\partial^3v}{\partial x_i^3}\,\mathrm{d}x_1\mathrm{d}x_2\mathrm{d}x_3.
\end{eqnarray*}
\end{lem}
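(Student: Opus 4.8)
The plan is to mirror the two-dimensional argument of Lemma \ref{lem1}, now in three variables, using the explicit Taylor-type expansion of $u-\Pi_K u$ against the eighteen bubble functions listed in (\ref{eq44}). First I would write, for $u\in P_4(K)$, the decomposition
\begin{equation*}
u-\Pi_K u = \frac{h^3}{2!}\sum_{i\neq j}\overline{\frac{\partial^3 u}{\partial x_i^2\partial x_j}}\,\tilde{\varphi}_{i,j} + \frac{h^4}{4!}\sum_i \frac{\partial^4 u}{\partial x_i^4}\,\tilde{\psi}_i + \frac{h^4}{2!2!}\sum_{i<j}\frac{\partial^4 u}{\partial x_i^2\partial x_j^2}\,\tilde{p}_{i,j} + \frac{h^4}{3!}\sum_{i\neq j}\frac{\partial^4 u}{\partial x_i^3\partial x_j}\,\tilde{q}_{i,j},
\end{equation*}
which is the three-dimensional analogue of (\ref{eq57}); here one must check that these bubbles do span the complement of $P_T(K)$ inside $P_4(K)$ modulo the kernel of $\Pi_K$, i.e. that the right-hand side has vanishing nodal parameters (listed already in the excerpt) and reproduces the degree-three and degree-four part of $u$ correctly — in particular that the cross term $x_1x_2x_3$, which lies in $P_T(K)$, needs no bubble. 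Note there is no pure $\partial^4 u/\partial x_1^2\partial x_2^2\partial x_3^2$-type obstruction since such a term has degree six.

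Next I would use the structure of $P_T(K)=P_2(K)+\mathrm{span}\{x_1^3,x_2^3,x_3^3,x_1x_2x_3\}$ to record, for $v\in P_T(K)$,
\begin{equation*}
\frac{\partial^2 v}{\partial x_i^2}=\overline{\frac{\partial^2 v}{\partial x_i^2}}+h\frac{\partial^3 v}{\partial x_i^3}\xi_i,\qquad \frac{\partial^2 v}{\partial x_i\partial x_j}=\overline{\frac{\partial^2 v}{\partial x_i\partial x_j}}\ \ (i\neq j),
\end{equation*}
the exact analogue of (\ref{eq21})–(\ref{eq24}); the mixed second derivatives of $v$ are constants because $x_1x_2x_3$ contributes only to $\partial^2/\partial x_i\partial x_j$ a term proportional to $\xi_k$ with $k\notin\{i,j\}$, whose average over $K$ vanishes — this point deserves a line of care. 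Then I would compute $\partial^2(u-\Pi_K u)/\partial x_i^2$ and $\partial^2(u-\Pi_K u)/\partial x_i\partial x_j$ from the bubble expansion, multiply against the corresponding derivative of $v$, integrate over $K$, and invoke parity in $(\xi_1,\xi_2,\xi_3)$ together with the symmetry of the reference cube: every monomial in $\xi$ that is odd in some variable integrates to zero, killing all terms except the ones where the $\xi_i$ coming from $\tilde\varphi_{j,i}$ (contributing $\partial^3 u/\partial x_j\partial x_i^2$ up to the $\xi_i$ factor) pairs with the $\xi_i$ from $h\,\partial^3 v/\partial x_i^3$. For the mixed-derivative contributions $J_2$-type terms I expect, exactly as in (\ref{eq12}), that every surviving integrand is odd in at least one variable, so those vanish identically.

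Collecting the one surviving family of terms gives, for each ordered pair $i\neq j$, a contribution $\tfrac{h^2}{3}\int_K \frac{\partial^3 u}{\partial x_i\partial x_j^2}\frac{\partial^3 v}{\partial x_i^3}$, which is the claimed identity after summing over the diagonal part of $\nabla^2:\nabla^2$ (the mixed part contributing nothing). The main obstacle, I expect, is purely bookkeeping: in three dimensions there are more bubble functions and more derivative components, so one must be systematic about which $\tilde q_{i,j}$ and $\tilde p_{i,j}$ enter $\partial^2/\partial x_i^2$ versus $\partial^2/\partial x_i\partial x_j$ and verify that none of their $\xi$-polynomials survives the parity/symmetry cancellation against $\overline{\partial^2 v/\partial x_i^2}$ or $h\,\partial^3 v/\partial x_i^3\,\xi_i$; the degree-four terms $\tilde\psi_i,\tilde p_{i,j},\tilde q_{i,j}$ all produce second derivatives that are either constants minus their average (hence zero average, killed when paired with the constant part of $v$'s second derivative) or odd monomials in $\xi$ (killed when paired with $\xi_i$), so they contribute only $O(h^3)$ — but since $u\in P_4(K)$ exactly here, one checks they contribute exactly zero to the stated identity. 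No genuinely new idea beyond Lemma \ref{lem1} is needed; the only real care is ensuring the bubble list (\ref{eq44}) is complete and that the $x_1x_2x_3$ degree of freedom does not secretly reintroduce a mixed-derivative term.
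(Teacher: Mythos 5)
Your overall strategy is exactly the paper's: expand $u-\Pi_K u$ against the bubble functions (\ref{eq44}), write the second derivatives of $v\in P_T(K)$ as average plus a linear correction, and kill everything except one family of terms by parity on the reference cube, each survivor contributing $h^2\overline{\partial^3 u/\partial x_i\partial x_j^2}\,\partial^3 v/\partial x_i^3\int_K\xi_i^2 = \tfrac{h^2}{3}\int_K \frac{\partial^3 u}{\partial x_i\partial x_j^2}\frac{\partial^3 v}{\partial x_i^3}$. However, there is one genuinely wrong step: you assert that the mixed second derivatives of $v$ are constants, i.e. $\frac{\partial^2 v}{\partial x_i\partial x_j}=\overline{\frac{\partial^2 v}{\partial x_i\partial x_j}}$ for $i\neq j$. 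That is false in three dimensions precisely because of the basis function $x_1x_2x_3$: for $v=x_1x_2x_3$ one has $\partial^2 v/\partial x_1\partial x_2 = x_3 = x_{3,c}+h\xi_3$, which is not constant. The fact that the $\xi_k$ term has zero average does not make the function constant; it only identifies the constant part of the decomposition. The paper's formula (\ref{eq28}) correctly reads
\begin{equation*}
\frac{\partial^2v}{\partial x_i\partial x_j}=\overline{\frac{\partial^2v}{\partial x_i\partial x_j}}+h\frac{\partial^3v}{\partial x_i\partial x_j\partial x_k}\xi_k,\qquad i\neq j\neq k,
\end{equation*}
and this is exactly the ``line of care'' you flagged but then resolved incorrectly.

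The error is repairable and does not change the conclusion: when you pair the extra $h\,\partial^3 v/\partial x_i\partial x_j\partial x_k\,\xi_k$ against each term of $\partial^2(u-\Pi_K u)/\partial x_i\partial x_j$ (namely $\xi_i$, $\xi_j$, $\xi_i\xi_j$, $3\xi_i^2-1$, $3\xi_j^2-1$ up to constants), every resulting integrand is odd in at least one of $\xi_i,\xi_j,\xi_k$, so the mixed contribution (\ref{eq12}) still vanishes. But as written your argument omits these cross terms entirely, and a second small inaccuracy compounds it: against the constant part $\overline{\partial^2 v/\partial x_i\partial x_j}$, the terms $3\xi_i^2-1$ and $3\xi_j^2-1$ are not odd in any variable --- they vanish because they have zero mean, not by parity. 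Both points need to be stated correctly for the proof to stand.
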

\begin{proof}
Let $\xi_1$, $\xi_2 $ and $\xi_3$ be defined as in $(\ref{eq2})$. It follows from the definition of $P_T(K)$ that\\
\begin{eqnarray}
\frac{\partial^2v}{\partial x_i^2}&=&\overline{\frac{\partial^2v}{\partial x_i^2}}+h\frac{\partial^3v}{\partial x_i^3}\xi_i,\quad i=1,2,3,\nonumber\\
\frac{\partial^2v}{\partial x_i\partial x_j}&=&\overline{\frac{\partial^2v}{\partial x_i\partial x_j}}+h\frac{\partial^3v}{\partial x_i\partial x_j\partial x_k}\xi_k,\quad i=1,2,\;j=2,3,\;k=1,2,3,\;i\neq j\neq k.\nonumber\\ \label{eq28}
\end{eqnarray}
Since $u\in P_4(K)$, the Taylor expansion and the definition of the operator $\Pi_K$ yield\\
\begin{eqnarray}
u-\Pi_K u&=&\frac{h^3}{2!}\sum_{i\neq j=1}^3\overline{\frac{\partial^3u}{\partial x_i^2\partial x_j}}\tilde{\varphi}_{i,j}+
          \frac{h^4}{4!}\sum_{i=1}^3\frac{\partial^4u}{\partial x_i^4}\tilde{\psi}_i\nonumber\\&&+
          \frac{h^4}{2!2!}\sum_{\substack{i=1,2\\j=2,3\\i\neq j}}\frac{\partial^4u}{\partial x_i^2\partial x_j^2}\tilde{p}_{i,j}+
          \frac{h^4}{3!}\sum_{i\neq j=1}^3\frac{\partial^4u}{\partial x_i^3\partial x_j}\tilde{q}_{i,j},\label{eq10}
\end{eqnarray}
where, $\tilde{\varphi}_{i,j}$, $\tilde{\psi}_i$, $\tilde{p}_{i,j}$, and $\tilde{q}_{i,j}$ are defined as in (\ref{eq44}).
Hence, the second order partial derivative of $u-\Pi_K u$ with respect to the variable $x_i$ reads
\begin{eqnarray}
\frac{\partial^2(u-\Pi_Ku)}{\partial x_i^2}&=&\frac{h}{2!}\sum_{i\neq j=1}^3\bigg(\overline{\frac{\partial^3u}{\partial x_i^2\partial x_j}}2\xi_j+\overline{\frac{\partial^3u}{\partial x_i\partial x_j^2}}2\xi_i\bigg)+  \frac{h^2}{4!}\frac{\partial^4u}{\partial x_i^4}(12\xi_i^2-4)\nonumber\\&&+
                                          \frac{h^2}{2!2!}\sum_{i\neq j=1}^3\frac{\partial^4u}{\partial x_i^2\partial x_j^2}(2\xi_j^2-\frac{2}{3})+
                                          \frac{h^2}{3!}\sum_{i\neq j=2}^3\frac{\partial^4u}{\partial x_i^3\partial x_j}6\xi_i\xi_j,\nonumber\\ \label{eq26}
\end{eqnarray}
A similar argument for the two-dimensional case shows
\begin{eqnarray*}
\int_K\frac{\partial^2(u-\Pi_Ku)}{\partial x_i^2}\frac{\partial^2v}{\partial x_i^2}\,\mathrm{d}x_1\mathrm{d}x_2\mathrm{d}x_3&=&
                 \frac{h^2}{3}\sum_{i\neq j=1}^3\int_K\frac{\partial^3u}{\partial x_i\partial x_j^2}\frac{\partial^3v}{\partial x_i^3}\,\mathrm{d}x_1\mathrm{d}x_2\mathrm{d}x_3.
\end{eqnarray*}
The second order mixed partial derivative of $u-\Pi_K u$ is
\begin{eqnarray*}
\frac{\partial^2(u-\Pi_Ku)}{\partial x_i\partial x_j}&=&\frac{h}{2!}\overline{\frac{\partial^3u}{\partial x_i^2\partial x_j}}2\xi_i+
                                                   \frac{h}{2!}\overline{\frac{\partial^3u}{\partial x_i\partial x_j^2}}2\xi_j+
                                                   \frac{h^2}{2!2!}\frac{\partial^4u}{\partial x_i^2\partial x_j^2}4\xi_i\xi_j+
                                                  \\&& \frac{h^2}{3!}\frac{\partial^4u}{\partial x_i^3\partial x_j}(3\xi_i^2-1)+
                                                   \frac{h^2}{3!}\frac{\partial^4u}{\partial x_i\partial x_j^3}(3\xi_j^2-1),\\&& i=1,2,\,j=2,3,\,i\neq j.\label{eq11}
\end{eqnarray*}
This and (\ref{eq28}) lead to
\begin{eqnarray}
\int_K\frac{\partial^2(u-\Pi_Ku)}{\partial x_i\partial x_j}\frac{\partial^2v}{\partial x_i\partial x_j}\,\mathrm{d}x_1\mathrm{d}x_2\mathrm{d}x_3&=&0,\quad i=1,2,\,j=2,3,\,i\neq j,\nonumber\\\label{eq12}
\end{eqnarray}
which completes the proof.
\end{proof}
Next, we use Lemma \ref{lem4} to analyze the key term in the proof of Theorem 3.1.
\begin{lem}\label{lem5}
Suppose that $w \in H_0^2(\Omega) \bigcap H^4(\Omega)$ with $\Omega\subset \mathbb{R}^3$. Then,
\begin{eqnarray*}
(\nabla_h^2(w-\Pi_h w),\nabla_h^2 \Pi_h w)_{L^2(\Omega)}&=&
-\frac{h^2}{3}\sum_{i\neq j\neq k=1}^3\sum_{K\in\mathcal{T}_h}\int_K\frac{\partial^3w}{\partial x_i\partial x_j^2}\frac{\partial^3w}{\partial x_i\partial x_k^2}\,\mathrm{d}x_1\mathrm{d}x_2\mathrm{d}x_3\\&&+Ch^3.
\end{eqnarray*}
\end{lem}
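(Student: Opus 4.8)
The plan is to imitate the proof of Lemma \ref{lem2}, splitting off the polynomial part with the operator $P_K$ of (\ref{eq7}). Writing
\[
(\nabla_h^2(w-\Pi_h w),\nabla_h^2\Pi_h w)_{L^2(\Omega)}=\sum_{K\in\mathcal T_h}(\nabla^2(P_Kw-\Pi_KP_Kw),\nabla^2\Pi_Kw)_{L^2(K)}+\sum_{K\in\mathcal T_h}(\nabla^2(I-\Pi_K)(I-P_K)w,\nabla^2\Pi_Kw)_{L^2(K)}=:I_1+I_2,
\]
the term $I_2$ is handled exactly as the term $I_2$ in Lemma \ref{lem2}: since $\int_K\nabla^l(I-P_K)w\,\mathrm{d}x=0$ for $l\le4$, the Poincar\'e inequality, the inverse inequality and the commuting identity (\ref{eq8}) give $|I_2|\le Ch^2\|(I-\Pi_0)\nabla_h^4w\|_{L^2(\Omega)}|w|_{H^2(\Omega)}$, which is of higher order by the density of piecewise constants in $L^2(\Omega)$. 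Thus everything reduces to $I_1$.

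For $I_1$, I would apply Lemma \ref{lem4} with $u=P_Kw\in P_4(K)$ and $v=\Pi_Kw\in P_T(K)$ to obtain
\[
I_1=\frac{h^2}{3}\sum_{i\ne j=1}^3\sum_{K\in\mathcal T_h}\int_K\frac{\partial^3P_Kw}{\partial x_i\partial x_j^2}\,\frac{\partial^3\Pi_Kw}{\partial x_i^3}\,\mathrm{d}x .
\]
Inserting $\partial_{x_i}^3\Pi_Kw=\partial_{x_i}^3w+\partial_{x_i}^3(\Pi_Kw-w)$ and $\partial_{x_i}\partial_{x_j}^2P_Kw=\partial_{x_i}\partial_{x_j}^2w+\partial_{x_i}\partial_{x_j}^2(P_K-I)w$, the two pieces carrying the factor $\partial_{x_i}\partial_{x_j}^2(P_K-I)w$ are $O(h^3)$ by (\ref{eq34}) together with the boundedness of $\|\partial_{x_i}^3\Pi_Kw\|_{L^2(K)}$. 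For the genuinely leading piece $\frac{h^2}{3}\sum_{i\ne j}\int_\Omega\partial_{x_i}\partial_{x_j}^2w\,\partial_{x_i}^3w\,\mathrm{d}x$, integrating by parts twice (once in $x_j$, once in $x_i$) and using $w\in H_0^2(\Omega)$ to discard the boundary integrals (the mixed derivatives $\partial_{x_i}\partial_{x_j}w$, $i\ne j$, vanish on $\partial\Omega$, as in the proof of Lemma \ref{lem2}) rewrites each summand as $\frac{h^2}{3}\|\partial_{x_i}^2\partial_{x_j}w\|_{L^2(\Omega)}^2$. For the remaining piece $\frac{h^2}{3}\sum_{i\ne j}\sum_K\int_K\partial_{x_i}\partial_{x_j}^2w\,\partial_{x_i}^3(\Pi_Kw-w)\,\mathrm{d}x$ I would use the explicit leading term of the interpolation error: from the expansion (\ref{eq10}) one computes $\partial_{x_i}^3\tilde\varphi_{k,l}=2h^{-3}\delta_{il}$ while the $h^4$-weighted bubbles contribute only $O(h)$, and $\overline{\partial_{x_i}\partial_{x_k}^2P_Kw}=\overline{\partial_{x_i}\partial_{x_k}^2w}$, hence $\partial_{x_i}^3(\Pi_Kw-w)\big|_K=-\sum_{k\ne i}\overline{\partial_{x_i}\partial_{x_k}^2w}+O(h)$. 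This turns that piece into $-\frac{h^2}{3}\sum_{K}|K|\sum_{i=1}^3\Big(\sum_{j\ne i}\overline{\partial_{x_i}\partial_{x_j}^2w}\Big)^2+O(h^3)$.

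It remains to expand the square into its diagonal part $\sum_{j\ne i}(\overline{\partial_{x_i}\partial_{x_j}^2w})^2$ and its off-diagonal part $\sum_{j\ne k,\,j,k\ne i}\overline{\partial_{x_i}\partial_{x_j}^2w}\;\overline{\partial_{x_i}\partial_{x_k}^2w}$, and to replace elementwise averages by elementwise integrals, which costs only $O(h^4)$ because $w\in H^4(\Omega)$. Since the six ordered pairs $(i,j)$ with $i\ne j$ run through exactly the same six mixed third-order monomials whether read as $\partial_{x_i}^2\partial_{x_j}$ or as $\partial_{x_i}\partial_{x_j}^2$, the diagonal contribution equals $-\frac{h^2}{3}\sum_{i\ne j}\|\partial_{x_i}^2\partial_{x_j}w\|_{L^2(\Omega)}^2+O(h^4)$ and cancels the leading piece found above; the off-diagonal contribution is exactly $-\frac{h^2}{3}\sum_{i\ne j\ne k}\sum_K\int_K\partial_{x_i}\partial_{x_j}^2w\,\partial_{x_i}\partial_{x_k}^2w\,\mathrm{d}x+O(h^4)$, which is the asserted right-hand side. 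The main obstacle is precisely this last bookkeeping step: in contrast with the two-dimensional case, where the mixed second derivative of $u-\Pi_Ku$ contributed nothing (see (\ref{eq12})), here the diagonal squares do not vanish term by term, and one must check carefully that they cancel against the leading-order term coming from Lemma \ref{lem4}, so that only the non-diagonal cross terms survive — these being responsible for the negative $O(h^2)$ quantity that creates the difficulty in three dimensions. One must also verify that the several $O(h)$ remainders from the interpolation-error expansion truly assemble into the stated $O(h^3)$.
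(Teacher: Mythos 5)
Your proposal is correct and follows essentially the same route as the paper: the same splitting $I_1+I_2$ via $P_K$, the same application of Lemma \ref{lem4} with $u=P_Kw$, $v=\Pi_Kw$, the same three-way decomposition of $I_1$ with the double integration by parts for the leading piece and the bubble expansion $\partial_{x_i}^3\tilde\varphi_{k,l}=2h^{-3}\delta_{il}$ for the interpolation-error piece, and the same Poincar\'e/commuting-property estimate for $I_2$. The only cosmetic difference is that you assemble the square $\bigl(\sum_{j\ne i}\overline{\partial_{x_i}\partial_{x_j}^2w}\bigr)^2$ and then separate diagonal from off-diagonal terms, whereas the paper keeps the pairs $(i,j)$ apart and observes the cancellation only when summing the $I_1^{(i,j)}$; the bookkeeping you flag as the main obstacle is handled correctly.
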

\begin{proof}
Given $K \in \mathcal{T}_h$, let the interpolation operator $P_K$ be defined as in (\ref{eq7}), which leads to the following decomposition
\begin{eqnarray}
(\nabla_h^2(w-\Pi_h w),\nabla_h^2\Pi_h w)_{L^2(\Omega)}&=&\sum_{K\in\mathcal{T}_h} (\nabla_h^2(P_Kw-\Pi_K P_K w),\nabla_h^2\Pi_K w)_{L^2(K)}\nonumber\\&&
                                                         +\sum_{K\in\mathcal{T}_h} (\nabla_h^2(I-\Pi_K)(I-P_K)w,\nabla_h^2\Pi_K w)_{L^2(K)}\nonumber\\
                                                       &=&I_1+I_2.  \label{eq20}
\end{eqnarray}
Let $u=P_Kw$ and $v=\Pi_Kw$ in Lemma \ref{lem4}, the first term $I_1$ on the right-hand side of  (\ref{eq20}) can be rewritten as

\begin{eqnarray*}
I_1=\frac{h^2}{3}\sum_{i\neq j=1}^3\sum_{K\in\mathcal{T}_h} \int_K\frac{\partial^3P_Kw}{\partial x_i\partial x_j^2}\frac{\partial^3\Pi_Kw}{\partial x_i^3}\,\mathrm{d}x_1\mathrm{d}x_2\mathrm{d}x_3=\sum_{i\neq j=1}^3I_1^{(i,j)}.
\end{eqnarray*}
The term $I_1^{(i,j)}$ has the following decomposition
\begin{eqnarray*}
I_1^{(i,j)}&=&\sum_{K\in\mathcal{T}_h} \frac{h^2}{3}\int_K\frac{\partial^3w}{\partial x_i\partial x_j^2}\frac{\partial^3w}{\partial x_i^3}\,\mathrm{d}x_1\mathrm{d}x_2\mathrm{d}x_3\\&&+
\sum_{K\in\mathcal{T}_h} \frac{h^2}{3}\int_K\frac{\partial^3w}{\partial x_i\partial x_j^2}\frac{\partial^3(\Pi_Kw-w)}{\partial x_i^3}\,\mathrm{d}x_1\mathrm{d}x_2\mathrm{d}x_3\\&&+
\sum_{K\in\mathcal{T}_h} \frac{h^2}{3}\int_K\frac{\partial^3(P_K-I)w}{\partial x_i\partial x_j^2}\frac{\partial^3\Pi_Kw}{\partial x_i^3}\,\mathrm{d}x_1\mathrm{d}x_2\mathrm{d}x_3\\&=&I_{1,1}^{(i,j)}+I_{1,2}^{(i,j)}+I_{1,3}^{(i,j)}.
\end{eqnarray*}
After integrating by parts twice, the first term of $I_1^{(i,j)}$ can be expressed as
\begin{eqnarray*}
I_{1,1}^{(i,j)}=\sum_{K\in\mathcal{T}_h} \frac{h^2}{3}\int_K\frac{\partial^3w}{\partial x_i\partial x_j^2}\frac{\partial^3w}{\partial x_i^3}\,\mathrm{d}x_1\mathrm{d}x_2\mathrm{d}x_3=\frac{h^2}{3}||\frac{\partial^3w}{\partial x_i^2\partial x_j}||^2_{L^2(\Omega)}.
\end{eqnarray*}
Since $||\frac{\partial^3 \Pi_K w}{\partial x_i^3}||_{L^2(K)}\leq C|w|_{H^3(K)}$ is bounded, it follows that
\begin{eqnarray*}
I_{1,3}^{(i,j)}=\sum_{K\in\mathcal{T}_h} \frac{h^2}{3}\int_K\frac{\partial^3(P_K-I)w}{\partial x_i\partial x_j^2}\frac{\partial^3\Pi_Kw}{\partial x_i^3}\,\mathrm{d}x_1\mathrm{d}x_2\mathrm{d}x_3\leq Ch^3|w|_{H^4(\Omega)}^2.
\end{eqnarray*}
Due to the Taylor expansion, see (\ref{eq10}), up to a higher order term, it follows that
\begin{eqnarray*}
I_{1,2}^{(i,j)}&=& \sum_{K\in\mathcal{T}_h} \frac{h^2}{3}\int_K\frac{\partial^3w}{\partial x_i\partial x_j^2}\frac{\partial^3(\Pi_Kw-w)}{\partial x_i^3}\,\mathrm{d}x_1\mathrm{d}x_2\mathrm{d}x_3\\&=&-\sum_{K\in\mathcal{T}_h} \frac{h^2}{3}\int_K\frac{\partial^3w}{\partial x_i\partial x_j^2}\frac{\partial^3w}{\partial x_i\partial x_j^2}\,\mathrm{d}x_1\mathrm{d}x_2\mathrm{d}x_3-\sum_{K\in\mathcal{T}_h} \frac{h^2}{3}\int_K\frac{\partial^3w}{\partial x_i\partial x_k^2}\frac{\partial^3w}{\partial x_i\partial x_j^2}\,\mathrm{d}x_1\mathrm{d}x_2\mathrm{d}x_3\\&&+Ch^3\\
&=&-\frac{h^2}{3}\sum_{K\in\mathcal{T}_h}\int_K(\frac{\partial^3w}{\partial x_i\partial x_j^2})^2\,\mathrm{d}x_1\mathrm{d}x_2\mathrm{d}x_3-\frac{h^2}{3}\sum_{K\in\mathcal{T}_h}\int_K\frac{\partial^3w}{\partial x_i\partial x_k^2}\frac{\partial^3w}{\partial x_i\partial x_j^2}\,\mathrm{d}x_1\mathrm{d}x_2\mathrm{d}x_3\\&&+Ch^3,\quad\;i\neq j\neq k=1,2,3.
\end{eqnarray*}
A summary of these three terms leads to
\begin{eqnarray*}
I^{(i,j)}_1&=&\frac{h^2}{3}||\frac{\partial^3w}{\partial x_i^2\partial x_j}||^2_{L^2(\Omega)}-\frac{h^2}{3}\sum_{K\in\mathcal{T}_h}\int_K\bigg(\frac{\partial^3w}{\partial x_i\partial x_j^2}\bigg)^2\,\mathrm{d}x_1\mathrm{d}x_2\mathrm{d}x_3\\&&-\frac{h^2}{3}\sum_{K\in\mathcal{T}_h}\int_K\frac{\partial^3w}{\partial x_i\partial x_j^2}\frac{\partial^3w}{\partial x_i\partial x_k^2}\,\mathrm{d}x_1\mathrm{d}x_2\mathrm{d}x_3+Ch^3,\quad\;i\neq j\neq k=1,2,3.
\end{eqnarray*}
It is a consequence of the sum of $I_1^{(i,j)}$ that
\begin{eqnarray}
I_1&=& -\frac{h^2}{3}\sum_{i\neq j\neq k=1}^3\sum_{K\in\mathcal{T}_h}\int_K\frac{\partial^3w}{\partial x_i\partial x_j^2}\frac{\partial^3w}{\partial x_i\partial x_k^2}\,\mathrm{d}x_1\mathrm{d}x_2\mathrm{d}x_3+Ch^3. \label{eq36}
\end{eqnarray}
We turn to the second term $I_2$ on the right-hand side of (\ref{eq20}), which can be estimated by the error estimates of (\ref{eq34}) as
\begin{equation*}
|I_2|\leq Ch\sum_{K\in\mathcal{T}_h}||\nabla_h^3(I-P_K)w||_{L^2(K)}|w|_{H^3(\Omega)}.
\end{equation*}
The definition of the projection operator $P_K$ gives
\begin{equation*}
\int_K \nabla_h^3(I-P_K)w\;\mathrm{d}x_1\mathrm{d}x_2\mathrm{d}x_3=0.
\end{equation*}
By the Poincare inequality, and the commuting property of (\ref{eq8}),
\begin{eqnarray}
|I_2|&\leq & Ch^2\sum_{K\in\mathcal{T}_h}||\nabla_h^4(I-P_K)w||_{L^2(K)}|w|_{H^3(\Omega)}\nonumber\\&\leq&Ch^2||(I-\Pi_0)\nabla_h^4 w||_{L^2(\Omega)}|w|_{H^3(\Omega)}.\label{eq29}
\end{eqnarray}
Since the piecewise constant functions are dense in the space $L^2(\Omega)$,
\begin{equation}
||(I-\Pi_0)\nabla_h^4 w||_{L^2(\Omega)}\rightarrow\; 0\;\;when\;\;h\rightarrow \;0.\label{eq30}
\end{equation}
A summary of (\ref{eq36}), (\ref{eq29}) and (\ref{eq30}) completes the proof.
\end{proof}
\begin{lem}\label{lem6}
Let u be the eigenfunction of Problem (\ref{eq1}). Assume that $u \in H_0^2(\Omega) \bigcap H^4(\Omega)$ with $\Omega\subset \mathbb{R}^3$, then
\begin{equation*}
J_2+J_3\geq 0+Ch^3.
\end{equation*}
\end{lem}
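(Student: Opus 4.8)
The plan is to combine the exact expression for $J_3$ coming from Lemma \ref{lem5} with a lower bound for $J_2$, and then show that the $O(h^2)$ pieces cancel favorably, leaving a non-negative leading term (plus higher order). First I would recall, from Lemma \ref{lem5}, that
\begin{equation*}
J_3 = 2(\nabla_h^2(u-\Pi_h u),\nabla_h^2\Pi_h u)_{L^2(\Omega)}
   = -\frac{2h^2}{3}\sum_{i\neq j\neq k=1}^3\sum_{K\in\mathcal{T}_h}\int_K\frac{\partial^3u}{\partial x_i\partial x_j^2}\frac{\partial^3u}{\partial x_i\partial x_k^2}\,\mathrm{d}x_1\mathrm{d}x_2\mathrm{d}x_3 + Ch^3.
\end{equation*}
Next I would produce a sharp asymptotic expansion of $J_2 = 2\sum_{i<j}\|\partial^2(u-\Pi_h u)/\partial x_i\partial x_j\|_{L^2(\Omega)}^2$. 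Here I would replace $u$ by $P_K u$ on each element (the error of this replacement is $O(h^3)$ by (\ref{eq34}) together with the boundedness argument used for $I_{1,3}$ in Lemma \ref{lem5}), and then use the explicit formula for the mixed second derivative $\partial^2(P_Ku-\Pi_K P_K u)/\partial x_i\partial x_j$ displayed in the proof of Lemma \ref{lem4}. Keeping only the leading ($h^1$) contributions
$\frac{h}{2!}\overline{\partial^3u/\partial x_i^2\partial x_j}\,2\xi_i+\frac{h}{2!}\overline{\partial^3u/\partial x_i\partial x_j^2}\,2\xi_j$, squaring, and using $\int_K\xi_i^2\,\mathrm{d}x=|K|h^2/3$ and $\int_K\xi_i\xi_j\,\mathrm{d}x=0$ for $i\neq j$, the cross terms drop out and each mixed-derivative term contributes $\frac{h^2}{3}\bigl(\|\overline{\partial^3 u/\partial x_i^2\partial x_j}\|^2+\|\overline{\partial^3u/\partial x_i\partial x_j^2}\|^2\bigr)$ over $K$. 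Summing over elements and the three pairs, and passing from cell averages to the functions themselves at the cost of $O(h^3)$, this yields
\begin{equation*}
J_2 = \frac{2h^2}{3}\sum_{i\neq j=1}^3\Big\|\frac{\partial^3 u}{\partial x_i^2\partial x_j}\Big\|_{L^2(\Omega)}^2 + Ch^3.
\end{equation*}

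With both expansions in hand, I would add them and recognize that the sum of the leading $O(h^2)$ contributions is, up to the factor $2h^2/3$, a sum of squares. Indeed, grouping the terms by the repeated index $i$, for each fixed $i$ the contribution is
\begin{equation*}
\sum_{j\neq i}\Big\|\frac{\partial^3 u}{\partial x_i\partial x_j^2}\Big\|^2
 - \sum_{\substack{j\neq i,\,k\neq i\\ j\neq k}}\int_\Omega \frac{\partial^3u}{\partial x_i\partial x_j^2}\frac{\partial^3u}{\partial x_i\partial x_k^2}
 = \frac12\sum_{\substack{j\neq i,\,k\neq i\\ j\neq k}}\int_\Omega\Big(\frac{\partial^3u}{\partial x_i\partial x_j^2}-\frac{\partial^3u}{\partial x_i\partial x_k^2}\Big)^2
 \;\ge\;0,
\end{equation*}
where I have used that the two off-diagonal indices $j,k$ run over the same two-element set once $i$ is fixed, so the diagonal terms reassemble into the squared differences. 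Hence $J_2+J_3 = \frac{2h^2}{3}\cdot(\text{non-negative})+Ch^3 \ge 0+Ch^3$, which is the claim.

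The main obstacle I anticipate is the bookkeeping in the $J_2$ expansion: one must be careful that after replacing $u$ by $P_Ku$ and substituting the explicit derivative formula from Lemma \ref{lem4}, the only surviving leading terms are the $\xi_i^2$ and $\xi_j^2$ contributions, that all cross terms (those involving $\xi_i\xi_j$ with $i\neq j$, or $\xi_i$ paired with an even power) integrate to zero by parity on the symmetric cube, and that the $h^2$-order terms coming from the fourth derivatives in the formula for $\partial^2(P_Ku-\Pi_KP_Ku)/\partial x_i\partial x_j$ contribute only at order $h^3$ after squaring and summing. A secondary point requiring care is the passage between integral averages $\overline{\partial^3 u/\partial x_i^2\partial x_j}$ over $K$ and the global $L^2(\Omega)$ norms of $\partial^3 u/\partial x_i^2\partial x_j$; this is controlled by $\|v-\overline v\|_{L^2(K)}\le Ch|v|_{H^1(K)}$ applied to third derivatives of $u\in H^4(\Omega)$, contributing $O(h^3)$ overall. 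Once these routine estimates are discharged, the algebraic identity reorganizing the leading term into a sum of squares is the essential and clean step.
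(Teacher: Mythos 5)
Your proposal is correct and follows essentially the same route as the paper: expand $J_2$ to leading order via the Taylor expansion of $u-\Pi_K u$ and parity on the symmetric element, invoke Lemma \ref{lem5} for $J_3$, and absorb the cross terms into the squares (your sum-of-squares identity is exactly the Cauchy--Schwarz step $2ab\le a^2+b^2$ used in the paper, and your early replacement of the weight $\xi_i^2$ by $\tfrac13$ is the same mean-zero-weight argument the paper performs at the end). The only difference is the order of these elementary steps, so the two proofs coincide in substance.
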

\begin{proof}
 $J_2$ can be expressed as
\begin{eqnarray*}
J_2&=&2\int_\Omega\bigg[\bigg(\frac{\partial^2(u-\Pi_h u)}{\partial x_1\partial x_2}\bigg)^2+\bigg(\frac{\partial^2(u-\Pi_h u)}{\partial x_1\partial x_3}\bigg)^2+\bigg(\frac{\partial^2(u-\Pi_h u)}{\partial x_2\partial x_3}\bigg)^2\bigg]\,\mathrm{d}x_1\mathrm{d}x_2\mathrm{d}x_3\\&=&2\sum_{K\in\mathcal{T}_h}\int_K\bigg[\bigg(\frac{\partial^2(u-\Pi_K u)}{\partial x_1\partial x_2}\bigg)^2+\bigg(\frac{\partial^2(u-\Pi_K u)}{\partial x_1\partial x_3}\bigg)^2+\bigg(\frac{\partial^2(u-\Pi_K u)}{\partial x_2\partial x_3}\bigg)^2\bigg]\,\mathrm{d}x_1\mathrm{d}x_2\mathrm{d}x_3.
\end{eqnarray*}
By the Taylor expansion (\ref{eq10}), up to a higher order term, $u-\Pi_K u$ has the following expression,
\begin{eqnarray*}
u-\Pi_K u&=&\frac{h^3}{2!}\sum_{i\neq j=1}^3\overline{\frac{\partial^3 u}{\partial x_i^2\partial x_j}}\tilde{\varphi}_{i,j}+O(h^4)\\&=&\frac{h^3}{2!}\sum_{i\neq j=1}^3\overline{\frac{\partial^3 u}{\partial x_i^2\partial x_j}}\bigg(\xi_i^2\xi_j-\frac{4}{3}\xi_j+\frac{\xi_j^3}{3}\bigg)+O(h^4),
\end{eqnarray*}
which $\xi_i$ are defined as in (\ref{eq2}). The second order mixed partial derivative of $u-\Pi_K u$ yields
\begin{eqnarray*}
\frac{\partial^2(u-\Pi_Ku)}{\partial x_i\partial x_j}&=&\frac{h}{2!}\overline{\frac{\partial^3u}{\partial x_i^2\partial x_j}}2\xi_i+
                                                   \frac{h}{2!}\overline{\frac{\partial^3u}{\partial x_i\partial x_j^2}}2\xi_j+O(h^2),\quad i=1,2,\;j=2,3,\;i\neq j.
\end{eqnarray*}
Since $\overline{\frac{\partial^3u}{\partial x_i^2\partial x_j}}$ and $\overline{\frac{\partial^3u}{\partial x_i\partial x_j^2}}$ are constants, we can get that by parity of functions and symmetry of domains:
\begin{equation*}
\int_K \overline{\frac{\partial^3u}{\partial x_i^2\partial x_j}}\;\;\overline{\frac{\partial^3u}{\partial x_i\partial x_j^2}}\xi_i\xi_j\;\mathrm{d}x_1\mathrm{d}x_2\mathrm{d}x_3=0,\quad i=1,2,\;j=2,3,\;i\neq j.
\end{equation*}
This yields
\begin{eqnarray*}
\int_K \bigg(\frac{\partial^2(u-\Pi_Ku)}{\partial x_i\partial x_j}\bigg)^2\;\mathrm{d}x_1\mathrm{d}x_2\mathrm{d}x_3&=&h^2\int_K\bigg(\frac{\partial^3u}{\partial x_i^2\partial x_j}\bigg)^2\xi_i^2\;\mathrm{d}x_1\mathrm{d}x_2\mathrm{d}x_3\\&&+h^2\int_K\bigg(\frac{\partial^3u}{\partial x_i\partial x_j^2}\bigg)^2\xi_j^2\;\mathrm{d}x_1\mathrm{d}x_2\mathrm{d}x_3+O(h^4),\\&& i=1,2,\;j=2,3,\;i\neq j.
\end{eqnarray*}
Thus, $J_2$ can be rewritten as
\begin{equation*}
J_2=2h^2\sum_{i\neq j=1}^3\sum_{K\in\mathcal{T}_h}\int_K\bigg(\frac{\partial^3u}{\partial x_i^2\partial x_j}\bigg)^2\xi_i^2\,\mathrm{d}x_1\mathrm{d}x_2\mathrm{d}x_3+O(h^4).
\end{equation*}
Concerning the last term of (\ref{eq19}), in Lemma \ref{lem5}, it follows that
\begin{eqnarray}
(\nabla_h^2(u-\Pi_h u),\nabla_h^2\Pi_h u)_{L^2(\Omega)} &=&-\frac{h^2}{3}\sum_{i\neq j\neq k=1}^3\sum_{K\in\mathcal{T}_h}\int_K\frac{\partial^3u}{\partial x_i\partial x_j^2}\frac{\partial^3u}{\partial x_i\partial x_k^2}\,\mathrm{d}x_1\mathrm{d}x_2\mathrm{d}x_3\nonumber\\&&+Ch^3.  \label{eq14}
\end{eqnarray}
By the Cauchy-Schwartz inequality, this yields
\begin{eqnarray*}
J_3&\geq&-\frac{2h^2}{3}\sum_{i\neq j=1}^3\sum_{K\in\mathcal{T}_h}\int_K\bigg(\frac{\partial^3u}{\partial x_i\partial x_j^2}\bigg)^2\,\mathrm{d}x_1\mathrm{d}x_2\mathrm{d}x_3+Ch^3.
\end{eqnarray*}
Hence,
\begin{eqnarray*}
J_2+J_3&\geq& 2h^2\sum_{i\neq j=1}^3\sum_{K\in\mathcal{T}_h}\int_K\bigg(\frac{\partial^3u}{\partial x_i^2\partial x_j}\bigg)^2\xi_i^2\,\mathrm{d}x_1\mathrm{d}x_2\mathrm{d}x_3\\&&-\frac{2h^2}{3}\sum_{i\neq j=1}^3\sum_{K\in\mathcal{T}_h}\int_K\bigg(\frac{\partial^3u}{\partial x_i\partial x_j^2}\bigg)^2\,\mathrm{d}x_1\mathrm{d}x_2\mathrm{d}x_3+Ch^3.
\end{eqnarray*}
Given $K\in\mathcal{T}_h$, it is sufficient to analyze the following term
\begin{eqnarray*}
&&2h^2\int_K\bigg(\frac{\partial^3u}{\partial x_1^2\partial x_2}\bigg)^2\xi_1^2\,\mathrm{d}x_1\mathrm{d}x_2\mathrm{d}x_3-\frac{2h^2}{3}\int_K\bigg(\frac{\partial^3u}{\partial x_1^2\partial x_2}\bigg)^2\,\mathrm{d}x_1\mathrm{d}x_2\mathrm{d}x_3\nonumber\\
&=&2h^2\int_K\bigg(\frac{\partial^3u}{\partial x_1^2\partial x_2}\bigg)^2(\xi_1^2-\frac{1}{3})\,\mathrm{d}x_1\mathrm{d}x_2\mathrm{d}x_3\nonumber\\
&=&2h^2\int_K\bigg[\bigg(\frac{\partial^3u}{\partial x_1^2\partial x_2}\bigg)^2-\bigg(\overline{\frac{\partial^3u}{\partial x_1^2\partial x_2}}\bigg)^2\bigg](\xi_1^2-\frac{1}{3})\,\mathrm{d}x_1\mathrm{d}x_2\mathrm{d}x_3\nonumber\\
&&+2h^2\int_K\bigg(\overline{\frac{\partial^3u}{\partial x_1^2\partial x_2}}\bigg)^2(\xi_1^2-\frac{1}{3})\,\mathrm{d}x_1\mathrm{d}x_2\mathrm{d}x_3\label{eq17}\\
&\leq& Ch^3|u|_{H^4(K)}^2, \label{eq23}
\end{eqnarray*}
where we use the fact that the second term of the second identity vanishes.
\end{proof}

\section{Numerical results}
In this section, we present some numerical results to demonstrate our theoretical results. Herein, we denote $r$ as the rate of convergence. In the first example, we consider the following eigenvalue problem of the two-dimensional biharmonic equation imposed the following boundary conditions
\begin{equation}
\Delta^2 u=\lambda u,\; \text{in}\; \Omega=[0,1]^2,
\end{equation}
(a)The clamped boundary condition
\begin{equation*}
u=\partial_\nu u=0\;\;\text{on}\; \partial\Omega.
\end{equation*}
(b)The simply supported boundary condition
\begin{equation*}
u=0\;\;   \text{on}\; \partial\Omega.
\end{equation*}
We partition the domain $\Omega$ into the uniform squares with the meshsize $h=\frac{1}{N}$ for some integer. The first six eigenvalues are listed in Table 1, Table 2, respectively.

In the second example, we consider the following eigenvalue problem of the three-dimensional biharmonic equation imposed the following boundary conditions
\begin{equation}
\Delta^2 u=\lambda u\; \text{in}\; \Omega=[0,1]^3,
\end{equation}
(c)The clamped boundary condition
\begin{equation*}
u=\partial_\nu u=0\;\;   \text{on}\; \partial\Omega.
\end{equation*}
(d)The simply supported boundary condition
\begin{equation*}
u=0\;\;   \text{on}\; \partial\Omega.
\end{equation*}
We partition the domain $\Omega$ into the uniform cubics with the meshsize $h=\frac{1}{N}$ for some integer.  The first six eigenvalues are listed in Table 3, Table 4, respectively.


 From the tables, we can find that the discrete eigenvalues converge monotonically from below to the exact ones for all the boundary conditions under consideration.
\begin{rem}
In this paper, we provide the proof of lower bounds of eigenvalues for the clamped boundary condition. However, the analysis in this paper does not cover the case for the simply supported boundary condition. From the numerical results, we can find that it also holds for the simply supported boundary condition.
\end{rem}

\newpage

\begin{table}[h]
 \centering
 \caption{The first six eigenvalues for the clamped boundary condition in 2-D case}
 \begin{tabular}{ccccccc}
 \hline
 N              &4          &8           &12        &16        &32   &\\\hline
 $\lambda_{1,h}$&1075.8563  &1223.1076   &1261.1771 &1275.5592 &1289.9935&$\nearrow$\vspace{2mm}\\
 $\lambda_{2,h}$&4481.4554  &5017.6904   & 5205.0626 & 5280.6461 &5359.1648&$\nearrow$\vspace{2mm}\\
 $\lambda_{3,h}$&4481.4554  &5017.6904   & 5205.0626 & 5280.6461 &5359.1648&$\nearrow$\vspace{2mm}\\
 $\lambda_{4,h}$&7697.5590  & 9953.5911   &10819.5084 &11183.7787 &11572.2467&$\nearrow$\vspace{2mm}\\
 $\lambda_{5,h}$&15704.3199  &16244.1142   &16743.9469 &16971.6555 &17222.4239&$\nearrow$\vspace{2mm}\\
 $\lambda_{6,h}$&16296.5202  &16520.5023   &16955.9294 &17162.3431 &17393.3846&$\nearrow$\vspace{2mm}\\\hline
 \end{tabular}
\end{table}

\begin{table}[h]
 \centering
 \caption{The first six eigenvalues for the simply supported boundary condition in 2-D case}
 \begin{tabular}{cccccccc}
 \hline
 N              &4          &8           &12        &16        &32       & & Exact  \\\hline
 $\lambda_{1,h}$&347.5266  &377.6791   & 384.1862 &386.5430 & 388.8563 &$\nearrow$& $ 4\pi^4\approx$389.6364\vspace{2mm}\\
 $r$ &--- &1.816267&1.937758&1.968793&1.987512& &\vspace{2mm}\\
 $\lambda_{2,h}$&2104.3141  &2323.3219   &2382.3420 & 2404.8176 &2427.4598&$\nearrow$&$25\pi^4\approx$2435.2273
 \vspace{2mm} \\
 $\lambda_{3,h}$&2104.3141  &2323.3219   &2382.3420 & 2404.8176 &2427.4598&$\nearrow$&$ 25\pi^4\approx$2435.2273
 \vspace{2mm}\\
  $r$&--- &1.564173&1.848565&1.923527&1.969013& &\vspace{2mm}\\
 $\lambda_{4,h}$&4428.5078  &5560.4260   &5905.5665 &6042.8650 &6184.6886&$\nearrow$&$ 64\pi^4\approx$6234.1818\vspace{2mm}\\
 $r$ &--- &1.422239&1.770756&1.880399&1.950661& &\vspace{2mm}\\
 $\lambda_{5,h}$&8883.3154  &9298.3330   & 9516.2149 &9608.4258 &9706.2378&$\nearrow$&$ 100\pi^4\approx$9740.9091\vspace{2mm}\\
 $\lambda_{6,h}$&8883.3154  &9298.3330   & 9516.2149 &9608.4258 &9706.2378&$\nearrow$&$ 100\pi^4\approx$9740.9091\vspace{2mm}\\
$r$ &--- &0.954369&1.671838&1.836346&1.933997& &\vspace{2mm}\\\hline
 \end{tabular}
\end{table}
\newpage
\begin{table}[h]
 \centering
 \caption{The first six eigenvalues for the clamped boundary condition in 3-D case}
 \begin{tabular}{cccccc}
 \hline
 N              &4          &8           &12        &16          &\\\hline
 $\lambda_{1,h}$&1714.3524  &2136.8429   &2255.9156 &2302.1447 &$\nearrow$\vspace{2mm}\\
 $\lambda_{2,h}$&5174.6283  &6369.4367   &6796.5628 &6972.4742 &$\nearrow$\vspace{2mm}\\
 $\lambda_{3,h}$&5174.6283  &6369.4367   &6796.5628 &6972.4742 &$\nearrow$\vspace{2mm}\\
 $\lambda_{4,h}$&5174.6283  &6369.4367   &6796.5628 &6972.4742 &$\nearrow$\vspace{2mm}\\
 $\lambda_{5,h}$&8539.6777  & 11655.1631   &12920.9204 &13468.3120&$\nearrow$ \vspace{2mm}\\
 $\lambda_{6,h}$&8539.6777  & 11655.1631  &12920.9204 &13468.3120 &$\nearrow$\vspace{2mm}\\\hline
 \end{tabular}
\end{table}

\begin{table}[h]
 \centering
 \caption{The first six eigenvalues for the simply supported boundary condition in 3-D case}
 \begin{tabular}{ccccccc}
 \hline
 N              &4          &8           &12        &16               & & Exact  \\\hline
 $\lambda_{1,h}$&718.3621  &828.0498   &854.1259 & 863.7983       &$\nearrow$ &$9\pi^4\approx$876.6818\vspace{2mm}\\
$r$&---&1.702863&1.894823&1.946762&&\vspace{2mm}\\
 $\lambda_{2,h}$& 2720.0885  & 3226.6792   &3372.2667 &3428.9320 &$\nearrow$ & $36\pi^4\approx$3506.7273\vspace{2mm}\\
 $\lambda_{3,h}$& 2720.0885  & 3226.6792   &3372.2667 &3428.9320 &$\nearrow$ & $36\pi^4\approx$3506.7273\vspace{2mm}\\
 $\lambda_{4,h}$& 2720.0885  & 3226.6792   &3372.2667 &3428.9320 &$\nearrow$& $36\pi^4\approx$3506.7273\vspace{2mm} \\
$r$&---&1.490027&1.809503&1.902066&&\vspace{2mm}\\
 $\lambda_{5,h}$& 5246.9541  &6842.3245   &7369.5014 &7584.7868 &$\nearrow$& $81\pi^4\approx$7890.1364\vspace{2mm}\\
 $\lambda_{6,h}$& 5246.9541  &6842.3245   &7369.5014 &7584.7868 &$\nearrow$&$81\pi^4\approx$7890.1364\vspace{2mm}\\
 $r$&---&1.334896&1.724958&1.854797&&\\\hline
 \end{tabular}
\end{table}

\end{CJK*}
\end{document}